\newtheorem{thm}{Theorem}
\newtheorem{defn}[thm]{Definition}
\newtheorem{lem}[thm]{Lemma}
\newtheorem{rem}[thm]{Remark}
\newcommand{\R}{\mathrm{I\hspace{-0.5ex}R}}
\newcommand {\dt}{\Delta{t}}
\newcommand {\dx}{\Delta{x}}
\title{The effect of artificial viscosity on numerical boundary feedback control of linear hyperbolic systems  }
\date{\today}
\author{M.K. Banda\footnote{Department of Mathematics and Applied Mathematics, University of Pretoria, South Africa} \and G.Y. Weldegiyorgis\footnotemark[1]}
\begin{document}
\maketitle

\begin{abstract}
A numerical analysis of the effect of artificial viscosity is undertaken in order to understand the effect of numerical diffusion on numerical boundary feedback control. The analysis is undertaken on the linear hyperbolic systems discretised using the upwind scheme. The upwind scheme solves the advection-diffusion equation with up to second-order accuracy. The analysis shows that the upwind scheme with CFL equal to one gives the expected theoretical decay up to first-order. On the other hand the upwind scheme with CFL less than one gives decay depending on the second derivative of the data and the CFL number. Further the decay rates deteriorate if the second derivatives of the solution are small. Thus the decay rates computed by the numerical schemes tend to be higher in comparison to the theoretical prediction. Computations on test cases which include isothermal Euler and the St Venant Equations confirm the analytical results.
 
% {\bf STORYLINE}
% \begin{itemize}
% \item UPWIND WITH CFL EQ 1 GIVES EXACT DECAY UP TO ORDER O(DX)
% \item CFL LQ 1 GIVES DIFFERENT DECAY  (DEPENDS ON 2nd DERIVATIVE OF DATA AND CFL)
% \item CONJECTURE: UPWIND SOLVES DIFFUSION UP TO ORDER O(DX**2)
% \item ANALYSIS SUGGEST WORSE RATE FOR UPWIND (IF 2ND DERIVATIVE IS SMALL)
% \item SUBTRACTING DIFFUSION TO UPWIND YIELDS SAME RATE AGAIN
% \item FIRST OBSERVATIONS: CONSTANT DATA: Really get a result in the lines $ - \alpha \mu$, $-\alpha \mu + \mu^2 \epsilon$
% \item SECOND OBSERVATION: SUBTRACTING THE DIFFUSION GETS THE EXPECTED DECAY FOR NON-CONSTANT DATA
% \end{itemize}

\noindent {\bf MSC 2000: Primary: 58F15, 58F17; Secondary: 53C35.}

\noindent {\bf Keywords: Lyapunov function, Feedback stabilisation, Numerical discretisation, artificial viscosity, hyperbolic systems} 
\end{abstract}

\section{Introduction}
For many physical systems described by hyperbolic equations, 
boundary feedback stabilisation based on local observations
is possible, see \cite{Coron_2009,Komornik:1997aa}. Based on 
a novel Lyapunov function,  progress on the analytical treatment of stabilisation 
of hyperbolic equations has been achieved and, in particular, 
for isothermal Euler equations  \cite{M.Gugat:2011aa,M.Dick:2010aa,M.Gugat:2012aa} and shallow--water equations 
\cite{J.-M.Coron:2002aa,J.-M.Coron:2008ab,J.deHalleux:2003aa,M.Gugat:2003aa,G.Leugering:2002aa,J.-M.Coron:2009aa,M.Gugat:2004aa} 
stabilisation results on the continuous setting are available.  Besides linearisation of nonlinear hyperbolic equations \cite{J.-M.Coron:2009aa}, 
feedback stabilisation of linear hyperbolic equations has been considered for a network of vibrating strings \cite{Bastin:2016aa,Gerster:2019aa} or as a model for electric transmission networks \cite{P.Schillen:2017aa}. The most common underlying tool is a suitably weighted $L^2 \text{ or } H^2$ norm used as a
Lyapunov function that gives rise to exponential decay under the so--called {\em dissipative} boundary conditions \cite{J.-M.Coron:2008ab,J.-M.Coron:2007ab}. It must be noted that in this framework, a comparison of the stabilisation concept with other existing results has been presented in \cite{J.-M.Coron:2015aa}. Therein it can be observed that stability in higher order Sobolev norms
also yields stabilisation of nonlinear systems \cite{J.-M.Coron:2007ac,Coron_2009}. 

While the research on  Lyapunov functions and stabilisation results is usually restricted to the analytical formulation only a few results on suitable numerical discretisation schemes and their corresponding analysis exist \cite{Herty:2019aa,P.Schillen:2017aa,M.K.Banda:2013aa, Banda_2018}. The analysis therein also presented explicit decay rates for the considered numerical schemes. In \cite{M.K.Banda:2013aa} exponential decay on a finite time horizon has been established for nonlinear conservations laws and the results have been extended to linear hyperbolic balance laws in~\cite{Herty:2019aa,P.Schillen:2017aa, Banda_2018} also with respect to higher--order norms. 

However, in the numerical simulations it has been observed that the actual decay of the discretised Lyapunov function is  much stronger compared to the theoretical prediction for the discrete scheme \cite{M.K.Banda:2013aa}. It was suggested that the additional decay is related to the presence of numerical diffusion in the numerical scheme. In fact, it is well--known that finite--volume schemes solve a modified (diffusive) equation to higher--order than the corresponding hyperbolic equation, the reader may refer to \cite{LeVeque:2002aa}. The goal of the present work is to clarify this conjecture and to quantify the (possibly small) diffusive effect on the numerical stabilisation result.  As in the previous work, we restrict ourselves in this paper to the case of linear  $2\times 2$ systems with one negative and one positive eigenvalue. However, the main result also remains valid in the case of $n\times n$  linear hyperbolic systems with eigenvalues not equal to zero. 

The paper is organised as follows: we present a motivating computation on the continuous level in Section \ref{sec1} before presenting the main result in Section \ref{sec2}. The latter states explicit decay rates of discretised linear hyperbolic equations perturbed by viscosity. Numerical results in Section \ref{sec3} validate the theoretical findings. 

\section{Motivation and Definition of the Problem} \label{sec1}

We start by briefly recall notation and existing results. Consider the stability of a steady state of a physical system described by a linear $2\times 2$ system of hyperbolic differential equations without source terms given by
\begin{equation} \label{conservationlaw}
\partial_t y(t,x) + A  \partial_x  y(t,x) = 0 
\end{equation}
with time and space variables $(t,x)\in [0,\infty)\times [0,1]$, and $y(t,x) \in \R^2$. Let $A\in \mathbb{R}^{2\times 2}$ be a diagonalisable matrix with real eigenvalues,
\begin{equation}\label{distinctEW}
{a}^- < 0 < {a}^+,
\end{equation}
 separated by zero. The steady state,  $\bar{y}(x)$, of the system is zero.  For such a $2\times 2$ strictly hyperbolic system there exists a diagonal matrix 
$
\bar{\Lambda} := \text{diag}\big\{{a}^+,{a}^-\big\}
$, 
whose entries are eigenvalues of $A$ and a matrix $E$ whose rows are left eigenvectors of the matrix $A$.

We further introduce the Riemann invariants (see Section 7.3 in \cite{C.M.Dafermos:2010aa})   
\begin{equation}\label{RiemannCoordinates} 
U(t,x):= \begin{pmatrix} U^+(t,x) \\ U^-(t,x) \end{pmatrix} := Ey(t,x) \in \R^2.
\end{equation}
Any smooth solutions to equation \eqref{conservationlaw} also solve the system 
\begin{equation}\label{linSystem}
\partial_t U(t,x) + \bar{\Lambda}  \partial_x  U(t,x) = 0,\; x \in [0,1], t \geq 0.
\end{equation}
We prescribe initial conditions $U_0 \in C^2(0,1)$ 
\begin{equation} \label{ic} U(0,x) = U_0(x), \;x \in [0,1], \end{equation} 
and linear feedback boundary conditions given by 
\begin{equation}\label{bc}
\begin{aligned}
\begin{pmatrix}
	U^+(t,0) \\ U^-(t,1)
\end{pmatrix}
&=K \;
\begin{pmatrix}
	U^+(t,1) \\ U^-(t,0)
\end{pmatrix}.
\end{aligned}
\end{equation}
where a matrix $K \in \mathbb{R}^{2\times 2}$.
Due to assumption~\eqref{distinctEW} characteristics are time-independent and can not intersect. Therefore, a solution $U(t,x)$ to equations \eqref{linSystem}, \eqref{ic} and \eqref{bc} exists for all $t\in \mathbb{R}^+_0$ as discussed in \cite{C.M.Dafermos:2010aa}.   We will show that $U(t,x)$ is stable with respect to the $L^2$-norm in the sense of Definition \ref{DefL2Stability} below, which is taken from \cite[Sec.~3]{Coron_2009}, where the precise definition of an $L^2$-solution is also found.

\begin{defn}\label{DefL2Stability}
The system \eqref{linSystem}, \eqref{ic} and \eqref{bc}  is exponentially stable with respect to  the $L^2$-norm if there exist constants $C_1>0$ and $C_2>0$ such that for every $U_0(\cdot) \in L^2\big( [0,1];\mathbb{R}^2 \big)$ the $L^2$-solution of equations \eqref{linSystem}, \eqref{ic} and \eqref{bc} satisfies
\begin{equation}\label{decay}
\big\lVert U(t,\cdot) \big\rVert_{L^2( [0,1];\mathbb{R}^2 )}
 \leq C_2 e^{-C_1 t}\big\lVert U(0,\cdot) \big\rVert_{L^2( [0,1];\mathbb{R}^2 )}, \quad
\forall \ t \in \mathbb{R}^+_0.
\end{equation}
\end{defn}
Several conditions on the matrix $K$ have been discussed such that  exponential stability in the sense
of Definition \ref{DefL2Stability} holds, see e.g. \cite{J.-M.Coron:2015aa} and references therein. 

We are interested in decay rates $C_1$ of numerical approximations to $U(t,x)$ on a spatial and temporal computational grid. Using a spatial discretisation width $\Delta x$, the space interval $[0,1]$ is divided into $J$ cells such that $\Delta x J = 1$ with cell centres $x_j := \big(j-\frac{1}{2}\big) \Delta x$  for $j=1,\ldots,J$ and outside the computational domain ghost-cells with centres $x_0$ and $x_{J+1}$ are added. The discrete time is denoted by $t^n:= n \Delta t$ for $n = 0,1 \ldots$, where $\Delta t>0$ such that the CFL-condition
\begin{equation}\label{CFL}
\textup{CFL} := \max \{ {a}^+, |{a}^-| \} 
\frac{\Delta t}{\Delta x} \leq 1,
\end{equation}
holds. A numerical finite--volume scheme  approximates the cell averages  of the solution $U(t,\cdot)$ at $t^n$  by
\begin{equation*}
U_j^n:= 
\begin{pmatrix} U_j^{n,+} \\ U_j^{n,-} \end{pmatrix} 
\approx \frac{1}{\Delta x}
\int\limits_{x_{j-1/2}}^{x_{j+1/2}} 
\begin{pmatrix} U^+(t^n,x) \\ U^-(t^n,x) \end{pmatrix} 
d x \in \mathbb{R}^2.
\end{equation*}
For $\dx$ sufficiently small,  a piece-wise constant reconstruction of $\mathbf{U}$ 
\begin{equation}
\mathbf{U}(t,x) = \sum\limits_{\substack{n = 0,1, \ldots \\ j=1, \ldots, J}} \chi_{ [t^n,t^{n+1}]\times [x_{j-1/2}, x_{j+1/2}]} U_j^n,
\end{equation} 
is expected to also satisfy the exponential stability criterion in equation \eqref{decay}. 

Given a numerical scheme, we are therefore interested in the question of how the numerical discretisation error deteriorates the theoretical expected rate $C_1$ in equation \eqref{decay}. As discussed in the introduction, the decay rates observed for $\mathbf{U}$ computed by numerical schemes are higher compared with the theoretical prediction. It has been conjectured \cite{M.K.Banda:2013aa} that the additional decay is due to artificial viscosity present in numerical schemes. In order to understand the role of possible viscosity, we present the following formal computation before turning to an analysis of the finite volume scheme for the computation of $\mathbf{U}.$ 

\begin{lem} \label{lem:contsStab}
For $\epsilon>0$,  given constant matrices $\bar{\Lambda}$ and $K$ and smooth initial data $V_0(\cdot)$, assume a $C^2( [0,\infty) \times [0,1] )^2$ solution $V(t,x)=(V^+,V^-)(t,x)$ of the viscous hyperbolic system 
\begin{align} \label{diffcont} 
&\partial_t V(t,x) +  \bar{\Lambda}  \partial_x  V(t,x)  =  \epsilon \partial_{xx} V(t,x), \\
&V(0,x) = V_0(x),\label{initialcont} \\
&\begin{pmatrix} V^+(t,0) \\ V^-(t,1) \end{pmatrix}
=K \; \begin{pmatrix}
	V^+(t,1) \\ V^-(t,0)
\end{pmatrix}, \;  \partial_x \begin{pmatrix}
	V^+(t,0) \\ V^-(t,1)
\end{pmatrix}
=K \;\partial_x 
\begin{pmatrix}
	V^+(t,1) \\ V^-(t,0)
\end{pmatrix},\label{boundarycont}
\end{align}
exists. If $K$ satisfies the following conditions 
\begin{align}
K^\top \begin{pmatrix} a^+ - \epsilon \mu & 0 \\ 0 & (|a^-| - \epsilon \mu) e^{\mu}  \end{pmatrix} K  =&\; \begin{pmatrix} (a^+ - \epsilon \mu)e^{-\mu} & 0 \\ 0 & |a^-| - \epsilon \mu \end{pmatrix},\label{eq:ThmCond01}\\
K^\top \begin{pmatrix} 1 & 0 \\ 0 & -e^{\mu}  \end{pmatrix} K =&\; \begin{pmatrix} e^{-\mu} & 0 \\ 0 & -1 \end{pmatrix},\label{eq:ThmCond02}
\end{align}
then $V$ is exponentially stable in the sense of Definition \ref{DefL2Stability}.
% with rate
%\begin{equation} 
%C_1 = \alpha \mu  - \epsilon \mu^2,
%\end{equation}
%where $ \alpha := \min \{ a^+,\; |a^-|\} $. 
\end{lem}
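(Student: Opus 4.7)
The plan is to build a weighted $L^2$ Lyapunov function of the form
\begin{equation*}
\Lyap(t) = \int_0^1 \Bigl( e^{-\mu x}\, V^+(t,x)^2 + e^{\mu x}\, V^-(t,x)^2 \Bigr)\,dx ,
\end{equation*}
with $\mu>0$ chosen small enough so that $a^+-\epsilon\mu>0$ and $|a^-|-\epsilon\mu>0$ (and compatible with \eqref{eq:ThmCond01}--\eqref{eq:ThmCond02}), and to show that along smooth solutions $\dot\Lyap\leq -\alpha\Lyap$ for some $\alpha>0$. Since $\Lyap$ is equivalent to $\|V\|_{L^2}^2$ up to the multiplicative factors $e^{\pm\mu}$, this immediately delivers the exponential decay required by Definition~\ref{DefL2Stability}.

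First I would differentiate $\Lyap$ in time, substitute the viscous system \eqref{diffcont}, and integrate by parts component by component. Each component produces three groups of terms: an interior coercive term proportional to $\mu(a^\pm{\mp}\epsilon\mu)\int e^{\mp\mu x}(V^\pm)^2\,dx$ (positive under our assumptions), a non-positive viscous term $-2\epsilon\int e^{\mp\mu x}(\partial_x V^\pm)^2\,dx$, and boundary contributions of two types: purely quadratic $V^2|_0^1$ weighted by $a^\pm-\epsilon\mu$ and the exponentials, and ``cross'' terms $V\partial_x V|_0^1$ carrying a factor $2\epsilon$ coming from the second integration by parts on the viscosity.

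The core of the argument is to show that both boundary packets are annihilated by the hypotheses on $K$. Setting $Z(t)=(V^+(t,1),V^-(t,0))^\top$ and $W(t)=(V^+(t,0),V^-(t,1))^\top$, the first identity in \eqref{boundarycont} reads $W=KZ$. The purely quadratic boundary packet assembles into
\begin{equation*}
W^\top\mathrm{diag}\bigl(a^+-\epsilon\mu,\,(|a^-|-\epsilon\mu)e^{\mu}\bigr)W - Z^\top\mathrm{diag}\bigl((a^+-\epsilon\mu)e^{-\mu},\,|a^-|-\epsilon\mu\bigr)Z ,
\end{equation*}
which vanishes identically after substituting $W=KZ$ by condition \eqref{eq:ThmCond01}. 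Defining $W'$, $Z'$ analogously from the spatial derivatives, the second identity in \eqref{boundarycont} gives $W'=KZ'$, and the $2\epsilon$-cross packet reduces to
\begin{equation*}
2\epsilon\, Z^\top\!\bigl[\mathrm{diag}(e^{-\mu},-1) - K^\top\mathrm{diag}(1,-e^{\mu})K\bigr]Z' ,
\end{equation*}
which vanishes by condition \eqref{eq:ThmCond02}. Combining the annihilation of both boundary packages with the interior estimate yields $\dot\Lyap(t)\leq -\mu\min(a^+-\epsilon\mu,\,|a^-|-\epsilon\mu)\,\Lyap(t)$, and Gronwall together with the norm equivalence closes the proof.

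The main obstacle I anticipate is the boundary bookkeeping. The quadratic and cross packets mix evaluations at $x=0$ and $x=1$ in an asymmetric way ($V^+$ lives on the left, $V^-$ on the right in $W$, and vice versa in $Z$), and one must verify that after regrouping they line up as $W^\top D W - Z^\top D' Z$ with precisely the diagonal matrices appearing in \eqref{eq:ThmCond01} and \eqref{eq:ThmCond02}; the signs (note the minus on $e^{\mu}$ in \eqref{eq:ThmCond02}, which tracks the sign of $a^-$ and the direction of the characteristic) must be handled carefully. A secondary technical point is that the viscous cross packet requires the extra boundary condition on $\partial_x V$ in \eqref{boundarycont}; this explains why the statement of the lemma imposes the derivative boundary relation in addition to the usual feedback law \eqref{bc}.
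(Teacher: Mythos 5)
Your proposal is correct and follows essentially the same route as the paper: the identical weighted Lyapunov function $\int_0^1 V^\top\mathrm{diag}(e^{-\mu x},e^{\mu x})V\,dx$, two integrations by parts producing the interior rate $\mu(a^\pm-\epsilon\mu)$ together with the non-positive viscous term, and the two boundary packets annihilated by \eqref{eq:ThmCond01} and \eqref{eq:ThmCond02} via the value and derivative feedback laws in \eqref{boundarycont}, yielding the rate $\eta_T=\alpha\mu-\epsilon\mu^2$. Your bookkeeping of the $W=KZ$, $W'=KZ'$ substitutions and the resulting quadratic and cross packets matches the paper's computation exactly.
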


\begin{proof}
For a fixed positive value $\mu>0$, we consider the non--negative candidate Lyapunov function $\mathcal{L}$ defined
on $[0, +\infty)$ for a $C^2-$solution of equation \eqref{diffcont} 
\begin{equation}\label{eq:LyapFunSystem}
\mathcal{L}(t) = \int_0^1 V^\top(t,x) P(\mu x) V(t,x) \mathrm{d}x, 
\end{equation}
where $ P(\mu x) := \text{diag}\{ e^{-\mu x},\;e^{\mu x}\} $.  

Since $\mathcal{L}$ is differentiable, we use integration by parts to analyse the time derivative of the Lyapunov function 
\begin{align*} 
\frac{d}{dt} \mathcal{L}(t) =& \int_0^1 2V^\top(t,x) P(\mu x) \partial_t V(t,x) \mathrm{d}x,\\ 
=& - 2\int_0^1 V^\top(t,x) P(\mu x)\bar{\Lambda} \partial_xV(t,x) \mathrm{d}x + 2\epsilon \int_0^1 V^\top(t,x) P(\mu x) \partial_{xx} V(t,x) \mathrm{d}x, \\
=& - \mu \int_0^1 V^\top P(\mu x)|\bar{\Lambda}| V \mathrm{d}x - 2\epsilon \int_0^1 \partial_xV^\top P(\mu x) \partial_xV \mathrm{d}x + \mu^2 \epsilon \int_0^1 V^\top P(\mu x) V \mathrm{d}x\\
 &- \left[V^\top(t,x) \left(P(\mu x)\bar{\Lambda} + \epsilon \dfrac{d}{\mathrm{d}x}P(\mu x) \right) V(t,x) \right]\bigg|_{x=0}^{x=1} + 2\epsilon \left[V^\top P(\mu x) \partial_xV \right]\Big|_{x=0}^{x=1}.
\end{align*}

By using the boundary conditions \eqref{boundarycont}, we have 
\begin{align*} 
\frac{d}{dt} \mathcal{L}(t) =&  - \mu \int_0^1 V^\top P(\mu x)|\bar{\Lambda}| V \mathrm{d}x - 2\epsilon \int_0^1 \partial_xV^\top P(\mu x) \partial_xV \mathrm{d}x + \mu^2 \epsilon \int_0^1 V^\top P(\mu x) V \mathrm{d}x\\
& - \begin{pmatrix} V^+(t,1) \\ V^-(t,0) \end{pmatrix}^\top \left( \begin{pmatrix} (a^+ - \epsilon \mu)e^{-\mu} & 0 \\ 0 & |a^-| - \epsilon \mu \end{pmatrix} - K^\top \begin{pmatrix} a^+ - \epsilon \mu & 0 \\ 0 & (|a^-| - \epsilon \mu) e^{\mu}  \end{pmatrix} K \right) \begin{pmatrix} V^+(t,1) \\ V^-(t,0) \end{pmatrix}\\
& + 2\epsilon\begin{pmatrix} V^+(t,1) \\ V^-(t,0) \end{pmatrix}^\top \left( \begin{pmatrix} e^{-\mu} & 0 \\ 0 & -1 \end{pmatrix} - K^\top \begin{pmatrix} 1 & 0 \\ 0 & -e^{\mu}  \end{pmatrix} K \right) \partial_x\begin{pmatrix} V^+(t,1) \\ V^-(t,0) \end{pmatrix},\\  
\end{align*}
 
Let $ \alpha:= \min\{ a^+,\; |a^-|\} $. Then, by \eqref{eq:ThmCond01}, \eqref{eq:ThmCond02}, the time derivative of the Lyapunov function is estimated by 
\begin{align*} 
\frac{d}{dt} \mathcal{L}(t) \leq & - \alpha\mu \int_0^1 V^\top P(\mu x) V \mathrm{d}x  +  \mu^2\epsilon \int_0^1 V^\top P(\mu x) V \mathrm{d}x = -\eta_T\mathcal{L}(t),
\end{align*}
where \begin{equation}\label{eqn:eta_T} \eta_T:=   \alpha\mu - \epsilon\mu^2. \end{equation} Hence. 
\begin{equation*}
\mathcal{L}(t) \leq e^{-\eta_T t}\mathcal{L}(0),\; t \in [0, \infty). 
\end{equation*}
We also note that $\mathcal{L}$ can be bound in the $L^2$-norm since $P(\mu x)$ is positive definite, it can be proved that $\lambda_{\min}(P(\mu x)) \|U\|_{L^2}^2 \le \mathcal{L}(t) \le \lambda_{\max}(P(\mu x))\|U\|_{L^2}^2$, where $\lambda_{\min}(\cdot) $ and $\lambda_{\max}(\cdot)$ denote the smallest and largest eigenvalues, respectively. See \cite{Bastin:2016aa, Weldegiyorgis_2019} for details.
\end{proof}

\begin{rem}
	In the case $\epsilon=0$, we recover known results on the stability. More general conditions \eqref{eq:ThmCond01}, \eqref{eq:ThmCond02} on $K$ in the case $\epsilon=0$ exist and we refer to  \cite{J.-M.Coron:2008aa} for more details. Later, the value of $\epsilon$ is related to artificially introduced numerical diffusion due to a spatial discretisation \cite{LeVeque:2002aa}. 
\end{rem}

In the next section, a presentation of the analysis of the discretised system will be presented. The section presents the main results of this paper.
%--------------------------------------------------------------------------------------------
%--------------------------------------------------------------------------------------------

\section{Main results}\label{sec2}

We consider the linear hyperbolic system of conservation laws \eqref{linSystem} and use a first order explicit Upwind scheme to approximate the derivatives. Thus, we obtain the following approximation of the system \eqref{linSystem} at point $ x_{j} $ and time $ t^n $
\begin{equation}\label{eq:disclinsystem}
\frac{1}{\Delta t} \begin{pmatrix} U_{j}^{n+1,+} - U_{j}^{n,+}  \\ U_{j}^{n+1,-} - U_{j}^{n,-} \end{pmatrix} + \frac{1}{\Delta x} \bar{\Lambda}\begin{pmatrix} U_{j}^{n,+} - U_{j-1}^{n,+}  \\ U_{j+1}^{n,-} - U_{j}^{n,-} \end{pmatrix} = 0,\quad j = 1,\ldots,J,\; n = 0,1,\ldots. 
\end{equation}

By using Taylor expansion on the approximation and taking further derivatives of the Taylor expansion with respect to time and space, we obtain the final form of the modified equation as (see Section 1 in \cite{Carpentier_1997} for details of the derivation): 
\begin{equation}\label{eq:ModifiedlinSystem}
\partial_t U(t,x) + \bar{\Lambda}  \partial_x  U(t,x) = \frac{1}{2}{\Delta x}|\bar{\Lambda}|\left( I_{2} - \frac{\Delta t}{\Delta x}|\bar{\Lambda}| \right) \partial_{xx}  U(t,x) + \mathcal{O}\left({\Delta t}, {\Delta x}  \right)^2,\; x \in [0,1], t \geq 0.
\end{equation}

\begin{rem}\label{rem:DiffusionConstant}
The diffusion coefficients $\epsilon^+ := \displaystyle{\frac{1}{2} a^+\dx\Big(1 - a^+\frac{\dt}{\dx}\Big)}$ and $\epsilon^- := \displaystyle{\frac{1}{2} |a^-|\dx\Big(1 - |a^-|\frac{\dt}{\dx}\Big)}$ vanish for $a^+\dt = \dx$ and $|a^-|\dt = \dx$, respectively. In such a case, the advection equation is solved exactly by the Upwind method. The diffusion coefficient is positive for $0 < |a^\pm|\dt/\dx < 1$, as a result the Upwind method is also stable.
\end{rem}

On the discretised space and time intervals, the discretisation of the modified equations \eqref{eq:ModifiedlinSystem} by using the Upwind scheme for first-order spatial derivatives, the Euler scheme for first-order time derivatives and second-order finite difference scheme for second-order spatial derivatives, we obtain 
\begin{subequations}\label{eq:discModified}
\begin{align}
\begin{pmatrix} U_{j}^{n+1,+} \\ U_{j}^{n+1,-} \end{pmatrix}=\; \begin{pmatrix} U_{j}^{n,+}  \\ U_{j}^{n,-} \end{pmatrix} - \frac{\Delta t}{\Delta x}\bar{\Lambda} \begin{pmatrix} U_{j}^{n,+} - U_{j-1}^{n,+}  \\ U_{j+1}^{n,-} - U_{j}^{n,-} \end{pmatrix} + \frac{\Delta t}{{\Delta x}^2}\begin{pmatrix} \epsilon^+\left(U_{j+1}^{n,+} - 2U_{j}^{n,+} + U_{j-1}^{n,+}\right)  \\ \epsilon^-\left(U_{j+1}^{n,-} -2U_{j}^{n,-} + U_{j-1}^{n,-}\right) \end{pmatrix},&\label{eq:discModifiedlinsystem}\\
 j = 1, \ldots, J,\; n = 0,1 \ldots,& \nonumber\\
U_{j}^{0} =\; \begin{pmatrix} U_{j}^{0,+} \\ U_{j}^{0,-} \end{pmatrix},\qquad \qquad\qquad\qquad\qquad\qquad\qquad\qquad\qquad\qquad j = 1, \ldots, J,&\label{eq:discModifiedlinsystemIC}\\
\begin{pmatrix} U_{0}^{n,+} \\ U_{J+1}^{n,-} \end{pmatrix}=\; K\begin{pmatrix} U_{J}^{n,+}  \\ U_{1}^{n,-} \end{pmatrix},\;  \begin{pmatrix} U_{1}^{n,+} - U_{0}^{n,+} \\ U_{J+1}^{n,-} - U_{J}^{n,-} \end{pmatrix}= K\begin{pmatrix} U_{J+1}^{n,+} - U_{J}^{n,+}  \\ U_{1}^{n,-} - U_{0}^{n,-} \end{pmatrix}, \quad n = 0,1, \ldots.&\label{eq:discModifiedlinsystemBCs}   
\end{align}
\end{subequations}
Below we present a definition of the discrete Lyapunov function as well as discrete exponential stability which will be followed by the main results of this section:
\begin{defn}[Discrete Lyapunov function]
For $ \mu > 0 $, we define a discrete Lyapunov function by
\begin{equation}\label{eq:dLyafun-system}
\mathcal{L}^{n} = {\Delta x} \sum_{j=1}^{J} U_{j}^{n \top} P_j(\mu) U_{j}^{n}, \quad n = 0,1, \ldots,
\end{equation}
where $ P_j(\mu):= \text{diag} \{e^{-\mu x_j}, \; e^{\mu x_j}\} $. 
\end{defn}

\begin{defn}[Discrete Exponential Stability]\label{Def:DiscExp}
The discretised system \eqref{eq:discModifiedlinsystem}, initial condition \eqref{eq:discModifiedlinsystemIC} and boundary conditions \eqref{eq:discModifiedlinsystemBCs} is exponentially stable with respect to the $ L^2- $norm if there exist $C_1 > 0$ and $C_2 > 0$ such that for every $U_j^0 $, $j = 1, \ldots , J$, the discrete $ L^2- $solution satisfies 
\begin{equation}\label{eq:DiscreteExpStability}
\|U_j^n \|_{L_{\Delta x}^2} \leq C_2e^{-C_1 t^n} \|U_j^0 \|_{L_{\Delta x}^2}, \quad n = 0,1, \ldots,
\end{equation}
where 
\begin{equation*}
\|U_j^n \|_{L_{\Delta x}^2}:=\sqrt{{\Delta x} \sum_{j=1}^{J} \left(|U_j^{n,+}|^2 + |U_j^{n,-}|^2\right) },\quad  \|U_j^0 \|_{L_{\Delta x}^2}:=\sqrt{{\Delta x} \sum_{j=1}^{J} \left(|U_j^{0,+}|^2 + |U_j^{0,-}|^2\right) }. 
\end{equation*}
\end{defn}

\begin{thm}[Stability]\label{NumericalStabThem}
Assume that the CFL condition \eqref{CFL} holds. Let $ \epsilon = \max\{\epsilon^+,\; \epsilon^- \}$. For $ \mu > 0 $ and matrix $ K $ satisfying the conditions \eqref{eq:NumThm01} - \eqref{eq:NumThm03}, if 
\begin{equation}\label{eq:NumThm01}
K^\top \left(\begin{smallmatrix}\left(a^+ + \frac{\epsilon^+}{{\Delta x}}\left(e^{-\mu {\Delta x}} -1\right)e^{\mu {\Delta x}} \right)e^{-\mu x_{1}} & 0 \\ 0 &  \left(|a^-| + \frac{\epsilon^-}{{\Delta x}}\left(e^{-\mu {\Delta x}} -1\right)e^{\mu {\Delta x}} \right)e^{\mu x_{J}} \end{smallmatrix} \right)K =  \left(\begin{smallmatrix}\left(a^+ + \frac{\epsilon^+}{{\Delta x}}\left(e^{-\mu {\Delta x}} -1\right)e^{\mu {\Delta x}} \right)e^{-\mu x_{J+1}} & 0 \\ 0 &  \left(|a^-| + \frac{\epsilon^-}{{\Delta x}}\left(e^{-\mu {\Delta x}} -1\right)e^{\mu {\Delta x}} \right)e^{\mu x_{0}} \end{smallmatrix} \right),
\end{equation} 
\begin{equation}\label{eq:NumThm02}
K^\top \left(\begin{smallmatrix} \frac{\epsilon^+}{{\Delta x}}e^{-\mu x_{0}} & 0 \\ 0 &  -\frac{\epsilon^-}{{\Delta x}}e^{\mu x_{J+1}} \end{smallmatrix} \right) K = \left(\begin{smallmatrix} \frac{\epsilon^+}{{\Delta x}}e^{-\mu x_{J}} & 0 \\ 0 &  -\frac{\epsilon^-}{{\Delta x}}e^{\mu x_{1}} \end{smallmatrix} \right),  
\end{equation}
and
\begin{equation}\label{eq:NumThm03}
K^\top \left(\begin{smallmatrix} \frac{\epsilon^+}{{\Delta x}}\left(a^+ + 2\frac{\epsilon^+}{\Delta x} \right) e^{-\mu x_{0}} & 0 \\ 0 &  -\frac{\epsilon^-}{{\Delta x}}\left(|a^-| + 2\frac{\epsilon^-}{\Delta x} \right)e^{\mu x_{J+1}} \end{smallmatrix} \right)K  = \left(\begin{smallmatrix} \frac{\epsilon^+}{{\Delta x}}\left(a^+ + 2\frac{\epsilon^+}{\Delta x} \right) e^{-\mu x_{J}} & 0 \\ 0 & - \frac{\epsilon^-}{{\Delta x}}\left(|a^-| + 2\frac{\epsilon^-}{\Delta x} \right)e^{\mu x_{1}} \end{smallmatrix} \right),  
\end{equation}
then the numerical solution $ U_{j}^{n} $ of the system \eqref{eq:discModified} is exponentially stable in the sense of Definition \ref{Def:DiscExp}.% with decay rate
%\begin{equation*}
%C_1 = \alpha \mu e^{-\mu {\Delta x}} - \epsilon\mu^2 > 0.
%\end{equation*}
\end{thm}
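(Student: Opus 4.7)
The plan is to mirror the continuous Lyapunov argument of Lemma~\ref{lem:contsStab} at the discrete level, replacing integration by parts with summation by parts (Abel's transformation). First I would substitute the explicit update \eqref{eq:discModifiedlinsystem} into $\mathcal{L}^{n+1}=\Delta x\sum_{j=1}^{J} U_j^{n+1,\top}P_j(\mu)U_j^{n+1}$ and expand the quadratic form, keeping separately the contributions from the advective difference $\bar{\Lambda}(U_j^n-U_{j-1}^n)/\Delta x$, from the centred second difference, and from the cross product of these two terms. Because $P_j$ is diagonal with entries $e^{\mp\mu x_j}$, a shift $j\mapsto j\pm 1$ inside the sums produces clean factors $e^{\mp\mu\Delta x}$, which is what allows the telescoping to be carried out cleanly.

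After reindexing, $\mathcal{L}^{n+1}-\mathcal{L}^n$ splits into interior bulk sums and boundary contributions supported only at the indices $j\in\{0,1,J,J+1\}$. The bulk sums are the discrete analogues of $-\mu\int V^\top P|\bar{\Lambda}|V$, $-2\epsilon\int\partial_xV^\top P\,\partial_xV$ and $+\mu^2\epsilon\int V^\top PV$ from the continuous computation; combining them and bounding from below by $\alpha:=\min\{a^+,|a^-|\}$ exactly as in Lemma~\ref{lem:contsStab} yields the dissipation rate $\eta:=\alpha\mu-\epsilon\mu^2$ with $\epsilon=\max\{\epsilon^+,\epsilon^-\}$, up to $\mathcal{O}(\Delta x,\Delta t)$ corrections absorbed by the CFL assumption \eqref{CFL}. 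For $\mu$ sufficiently small this $\eta$ is strictly positive. The boundary contributions, after grouping, take the form of three independent diagonal quadratic forms evaluated on the vector $(U_J^{n,+},U_1^{n,-})^\top$: one coming from the pure advective flux (carrying a diffusion-corrected weight $(|a^\pm|+\tfrac{\epsilon^\pm}{\Delta x}(e^{-\mu\Delta x}-1)e^{\mu\Delta x})e^{\mp\mu x_\bullet}$), one from the pure diffusive term (carrying $\epsilon^\pm/\Delta x$), and one from the square of the update that couples the two and produces the combination $|a^\pm|+2\epsilon^\pm/\Delta x$.

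Using the discrete boundary conditions \eqref{eq:discModifiedlinsystemBCs} to express the ghost values $U_0^{n,+}$, $U_{J+1}^{n,-}$ and the ghost differences in terms of $K$ times $(U_J^{n,+},U_1^{n,-})^\top$, each boundary quadratic form is rewritten as $(U_J^{n,+},U_1^{n,-})(M_k^{\text{in}}-K^\top M_k^{\text{out}}K)(U_J^{n,+},U_1^{n,-})^\top$; the assumptions \eqref{eq:NumThm01}--\eqref{eq:NumThm03} are precisely the identities $K^\top M_k^{\text{out}} K = M_k^{\text{in}}$ for $k=1,2,3$, so all three boundary pieces vanish. What remains is the estimate $\mathcal{L}^{n+1}\le(1-\Delta t\,\eta)\mathcal{L}^n$, hence $\mathcal{L}^n\le(1-\Delta t\,\eta)^n\mathcal{L}^0$, and the $L^2_{\Delta x}$ exponential bound \eqref{eq:DiscreteExpStability} then follows from the two-sided estimate $\lambda_{\min}(P_j)\|U^n\|_{L^2_{\Delta x}}^2\le\mathcal{L}^n\le\lambda_{\max}(P_j)\|U^n\|_{L^2_{\Delta x}}^2$, exactly as at the end of the proof of Lemma~\ref{lem:contsStab}.

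The main obstacle is the boundary bookkeeping. Unlike the continuous case, where integration by parts produced only two boundary terms and hence the two conditions \eqref{eq:ThmCond01}--\eqref{eq:ThmCond02}, squaring the explicit finite-volume update produces cross terms $U_j^n U_{j\pm1}^n$ whose reindexing leaves residual contributions at \emph{two} neighbouring layers near each physical boundary, namely $\{0,1\}$ and $\{J,J+1\}$. Separating these into exactly three diagonal quadratic forms with the very specific exponential weights appearing on both sides of \eqref{eq:NumThm01}--\eqref{eq:NumThm03} is the delicate step, and it also clarifies why a \emph{third} condition is required in the discrete setting: it encodes precisely the quadratic cross-interaction between the advective and viscous parts of the scheme, which has no continuous counterpart.
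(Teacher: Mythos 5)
Your proposal follows essentially the same route as the paper's proof: substitute the explicit update into $\mathcal{L}^{n+1}$, expand the square, telescope using the exponential weights (which produce the $e^{\pm\mu\Delta x}$ factors), control the cross terms via Young's inequality and the CFL condition, and annihilate the three boundary groupings exactly via \eqref{eq:NumThm01}--\eqref{eq:NumThm03} after inserting the discrete boundary conditions. The only substantive difference is quantitative: the paper does not absorb the reindexing factor into an $\mathcal{O}(\Delta x,\Delta t)$ correction but keeps it, obtaining the slightly weaker rate $\eta_N=\alpha\mu e^{-\mu\Delta x}-\epsilon\mu^2$ under the explicit smallness condition $\mu e^{\mu\Delta x}\le\alpha/\epsilon$, which does not affect the validity of your argument.
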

\begin{proof}
For a fixed $ \mu > 0 $, the discrete Lyapunov function defined by \eqref{eq:dLyafun-system} is estimated by 
\begin{equation}\label{eq:DiscNormEquivalence}
e^{-\mu}\|U_j^n \|_{L_{\Delta x}^2}^2 \leq \mathcal{L}^n \leq e^{\mu}\|U_j^n \|_{L_{\Delta x}^2}^2, \quad n = 0,1, \ldots. 
\end{equation}	
From using the discrete candidate Lyapunov function defined by \eqref{eq:dLyafun-system}, the time derivative of the candidate Lyapunov function \eqref{eq:LyapFunSystem} is approximated by  
\begin{equation}\label{eq:AppxDtLyafun01}
\begin{split}
\frac{\mathcal{L}^{n+1} - \mathcal{L}^{n}}{\Delta t} =&\; \frac{\Delta x}{\Delta t} \sum_{j=1}^{J} \left(U_{j}^{n+1 \top} P_j(\mu) U_{j}^{n+1} - U_{j}^{n \top} P_j(\mu) U_{j}^{n}\right), \quad n = 0,1 \ldots,\\ 
=&\; \underbrace{\frac{\Delta x}{\Delta t} \sum_{j=1}^{J} \left(\left( U_{j}^{n+1,+} \right)^2 - \left( U_{j}^{n,+} \right)^2\right)e^{-\mu x_j}}_{D_{j}^{n,+}} + \underbrace{\frac{\Delta x}{\Delta t} \sum_{j=1}^{J}\left(\left( U_{j}^{n+1,-} \right)^2 - \left( U_{j}^{n,-} \right)^2\right)e^{\mu x_j}}_{D_{j}^{n,-}}.
\end{split}\end{equation}

By the discrete system \eqref{eq:discModifiedlinsystem}, we have 
\begin{equation}\label{eq:AppxDtLyafun002}
\begin{split}
D_{j}^{n,+} =&\; \frac{\Delta x}{\Delta t} \sum_{j=1}^{J}\left[U_{j}^{n,+}  - \left(\frac{\Delta t}{\Delta x}a^+ + \frac{\Delta t}{{\Delta x}^2}\epsilon^+ \right)\left(U_{j}^{n,+} - U_{j-1}^{n,+}\right) + \left(\frac{\Delta t}{{\Delta x}^2}\epsilon^+ \right)\left(U_{j+1}^{n,+} - U_{j}^{n,+}\right)\right]^2e^{-\mu x_j},\\
&\; - \frac{\Delta x}{\Delta t} \sum_{j=1}^{J} \left(U_{j}^{n,+}\right)^2e^{-\mu x_j},\\
=&\; -2\left(\frac{\Delta t}{\Delta x}a^+ + \frac{\Delta t}{{\Delta x}^2}\epsilon^+ \right)\frac{\Delta x}{\Delta t} \sum_{j=1}^{J}\left(U_{j}^{n,+}\right)\left(U_{j}^{n,+} - U_{j-1}^{n,+}\right)e^{-\mu x_j}\\
&\; + \left(\frac{\Delta t}{\Delta x}a^+ + \frac{\Delta t}{{\Delta x}^2}\epsilon^+ \right)^2\frac{\Delta x}{\Delta t} \sum_{j=1}^{J}\left(U_{j}^{n,+} - U_{j-1}^{n,+}\right)^2e^{-\mu x_j}\\
&\; + \left(\frac{\Delta t}{{\Delta x}^2}\epsilon^+ \right)^2\frac{\Delta x}{\Delta t} \sum_{j=1}^{J}\left(U_{j+1}^{n,+} - U_{j}^{n,+}\right)^2e^{-\mu x_j} \\
&\; + 2\left(\frac{\Delta t}{{\Delta x}^2}\epsilon^+ \right)\frac{\Delta x}{\Delta t} \sum_{j=1}^{J}\left(U_{j}^{n,+}\right)\left(U_{j+1}^{n,+} - U_{j}^{n,+}\right)e^{-\mu x_j}\\
&\;-2\left(\frac{\Delta t}{{\Delta x}^2}\epsilon^+ \right)\left(\frac{\Delta t}{\Delta x}a^+ + \frac{\Delta t}{{\Delta x}^2}\epsilon^+ \right)\frac{\Delta x}{\Delta t} \sum_{j=1}^{J}\left(U_{j}^{n,+} - U_{j-1}^{n,+}\right)\left(U_{j+1}^{n,+} - U_{j}^{n,+}\right)e^{-\mu x_j},
\end{split}
\end{equation} 
and 
\begin{equation}\label{eq:AppxDtLyafun003}
\begin{split}
D_{j}^{n,-} =&\; \frac{\Delta x}{\Delta t} \sum_{j=1}^{J}\left[U_{j}^{n,-}  + \left(\frac{\Delta t}{\Delta x}|a^-| + \frac{\Delta t}{{\Delta x}^2}\epsilon^- \right)\left(U_{j+1}^{n,-} - U_{j}^{n,-}\right) - \left(\frac{\Delta t}{{\Delta x}^2}\epsilon^- \right)\left(U_{j}^{n,-} - U_{j-1}^{n,-}\right)\right]^2 e^{\mu x_j}\\
&\; -\frac{\Delta x}{\Delta t} \sum_{j=1}^{J}\left(U_{j}^{n,-}\right)^2e^{\mu x_j},\\
=&\;+2\left(\frac{\Delta t}{\Delta x}|a^-| + \frac{\Delta t}{{\Delta x}^2}\epsilon^- \right)\frac{\Delta x}{\Delta t} \sum_{j=1}^{J}\left(U_{j}^{n,-}\right)\left(U_{j+1}^{n,-} - U_{j}^{n,-}\right)e^{\mu x_j}\\
&\; + \left(\frac{\Delta t}{\Delta x}|a^-| + \frac{\Delta t}{{\Delta x}^2}\epsilon^- \right)^2\frac{\Delta x}{\Delta t} \sum_{j=1}^{J}\left(U_{j+1}^{n,-} - U_{j}^{n,-}\right)^2e^{\mu x_j}\\
&\; + \left(\frac{\Delta t}{{\Delta x}^2}\epsilon^- \right)^2\frac{\Delta x}{\Delta t} \sum_{j=1}^{J}\left(U_{j}^{n,-} - U_{j-1}^{n,-}\right)^2 e^{\mu x_j}\\
&\; - 2\left(\frac{\Delta t}{{\Delta x}^2}\epsilon^- \right)\frac{\Delta x}{\Delta t} \sum_{j=1}^{J}\left(U_{j}^{n,-}\right)\left(U_{j}^{n,-} - U_{j-1}^{n,-}\right)e^{\mu x_j}\\
&\;-2\left(\frac{\Delta t}{{\Delta x}^2}\epsilon^- \right)\left(\frac{\Delta t}{\Delta x}|a^-| + \frac{\Delta t}{{\Delta x}^2}\epsilon^- \right)\frac{\Delta x}{\Delta t} \sum_{j=1}^{J}\left(U_{j+1}^{n,-} - U_{j}^{n,-}\right)\left(U_{j}^{n,-} - U_{j-1}^{n,-}\right)e^{\mu x_j}.
\end{split}
\end{equation}

By using Young's inequality (i.e. $ \pm 2ab \leq a^2 + b^2\; a,b \in \mathbb{R}$), $ x_{j+1} = x_{j} + {\Delta x},\; j = 0,\ldots,J $, and the CFL condition \eqref{CFL} in \eqref{eq:AppxDtLyafun002},\eqref{eq:AppxDtLyafun003}, we obtain:
\begin{equation}\label{eq:AppxDtLyafun004}
\begin{split}
D_{j}^{n,+} \leq&\; -2\left(\frac{\Delta t}{\Delta x}a^+ + \frac{\Delta t}{{\Delta x}^2}\epsilon^+ \right)\frac{\Delta x}{\Delta t} \sum_{j=1}^{J}\left(U_{j}^{n,+}\right)\left(U_{j}^{n,+} - U_{j-1}^{n,+}\right)e^{-\mu x_j}\\
&\;+\left(\frac{\Delta t}{\Delta x}a^+ + \frac{\Delta t}{{\Delta x}^2}\epsilon^+ \right)\left(\frac{\Delta t}{\Delta x}a^+ + 2\frac{\Delta t}{{\Delta x}^2}\epsilon^+ \right)\frac{\Delta x}{\Delta t} \sum_{j=1}^{J}\left(U_{j}^{n,+} - U_{j-1}^{n,+}\right)^2e^{-\mu x_j}\\
&\;+2e^{\mu {\Delta x}}\left(\frac{\Delta t}{{\Delta x}^2}\epsilon^+ \right)\frac{\Delta x}{\Delta t} \sum_{j=1}^{J}\left(U_{j}^{n,+}\right)\left(U_{j}^{n,+} - U_{j-1}^{n,+}\right)e^{-\mu x_j}\\
&\;-e^{\mu {\Delta x}}\left(\frac{\Delta t}{{\Delta x}^2}\epsilon^+ \right)\left(2 - \frac{\Delta t}{\Delta x}a^+ - 2\frac{\Delta t}{{\Delta x}^2}\epsilon^+ \right)\frac{\Delta x}{\Delta t} \sum_{j=1}^{J}\left(U_{j}^{n,+} - U_{j-1}^{n,+}\right)^2e^{-\mu x_j}\\
&\; - 2\left(\frac{\Delta t}{{\Delta x}^2}\epsilon^+ \right)\frac{\Delta x}{\Delta t} \left[\left(U_{0}^{n,+}\right)\left(U_{1}^{n,+} - U_{0}^{n,+}\right)e^{-\mu x_0} - \left(U_{J}^{n,+}\right)\left(U_{J+1}^{n,+} - U_{J}^{n,+}\right)e^{-\mu x_J}\right]\\
&\;- \left(\frac{\Delta t}{{\Delta x}^2}\epsilon^+ \right)\left(\frac{\Delta t}{\Delta x}a^+ + 2\frac{\Delta t}{{\Delta x}^2}\epsilon^+ \right)\frac{\Delta x}{\Delta t} \left[\left(U_{1}^{n,+} - U_{0}^{n,+}\right)^2e^{-\mu x_0} - \left(U_{J+1}^{n,+} - U_{J}^{n,+}\right)^2e^{-\mu x_J}\right],
\end{split}
\end{equation} 
and 
\begin{equation}\label{eq:AppxDtLyafun005}
\begin{split}
D_{j}^{n,-} \leq&\; +2\left(\frac{\Delta t}{\Delta x}|a^-| + \frac{\Delta t}{{\Delta x}^2}\epsilon^- \right)\frac{\Delta x}{\Delta t} \sum_{j=1}^{J}\left(U_{j}^{n,-}\right)\left(U_{j+1}^{n,-} - U_{j}^{n,-}\right)e^{\mu x_j}\\
&\;+\left(\frac{\Delta t}{\Delta x}|a^-| + \frac{\Delta t}{{\Delta x}^2}\epsilon^- \right)\left(\frac{\Delta t}{\Delta x}|a^-| + 2\frac{\Delta t}{{\Delta x}^2}\epsilon^- \right)\frac{\Delta x}{\Delta t} \sum_{j=1}^{J}\left(U_{j+1}^{n,-} - U_{j}^{n,-}\right)^2e^{\mu x_j}\\
&\; - 2e^{\mu {\Delta x}}\left(\frac{\Delta t}{{\Delta x}^2}\epsilon^- \right)\frac{\Delta x}{\Delta t} \sum_{j=1}^{J}\left(U_{j}^{n,-}\right)\left(U_{j+1}^{n,-} - U_{j}^{n,-}\right)e^{\mu x_j}\\
&\;-e^{\mu {\Delta x}}\left(\frac{\Delta t}{{\Delta x}^2}\epsilon^- \right)\left(2 - \frac{\Delta t}{\Delta x}|a^-| - 2\frac{\Delta t}{{\Delta x}^2}\epsilon^- \right)\frac{\Delta x}{\Delta t} \sum_{j=1}^{J}\left(U_{j+1}^{n,-} - U_{j}^{n,-}\right)^2e^{\mu x_j}\\
&\; + 2\left(\frac{\Delta t}{{\Delta x}^2}\epsilon^- \right)\frac{\Delta x}{\Delta t} \left[\left(U_{J+1}^{n,-}\right)\left(U_{J+1}^{n,-} - U_{J}^{n,-}\right)e^{\mu x_{J+1}} - \left(U_{1}^{n,-}\right)\left(U_{1}^{n,-} - U_{0}^{n,-}\right)e^{\mu x_1} \right]\\
&\;-\left(\frac{\Delta t}{{\Delta x}^2}\epsilon^- \right)\left(\frac{\Delta t}{\Delta x}|a^-| + 2\frac{\Delta t}{{\Delta x}^2}\epsilon^- \right)\frac{\Delta x}{\Delta t} \left[\left(U_{J+1}^{n,-} - U_{J}^{n,-}\right)^2e^{\mu x_{J+1}} - \left(U_{1}^{n,-} - U_{0}^{n,-}\right)^2e^{\mu x_1}\right].
\end{split}
\end{equation}

By using $ 2a(a -b) =  a^2 - b^2 + (a - b)^2,\; a,b \in \mathbb{R}$ repeatedly, and  $ x_{j+1} = x_{j} + {\Delta x},\; j = 0,\ldots,J $ in \eqref{eq:AppxDtLyafun004}, \eqref{eq:AppxDtLyafun005}, we have 
\begin{equation}\label{eq:AppxDtLyafun006}
\begin{split}
D_{j}^{n,+} \leq&\; -\left[\left(\frac{\Delta t}{\Delta x}a^+ +2\frac{\Delta t}{{\Delta x}^2}\epsilon^+ \right) - \left(\frac{\Delta t}{\Delta x}a^+ +\frac{\Delta t}{{\Delta x}^2}\epsilon^+ \right)e^{-\mu {\Delta x}} - \left(\frac{\Delta t}{{\Delta x}^2}\epsilon^+ \right)e^{\mu {\Delta x}}\right]\frac{\Delta x}{\Delta t} \sum_{j=1}^{J}\left(U_{j}^{n,+}\right)e^{-\mu x_j}\\
&\;-\left(\frac{\Delta t}{\Delta x}a^+ + \frac{\Delta t}{{\Delta x}^2}\epsilon^+ \right)\left(1 - \frac{\Delta t}{\Delta x}a^+ - 2\frac{\Delta t}{{\Delta x}^2}\epsilon^+ \right)\frac{\Delta x}{\Delta t} \sum_{j=1}^{J}\left(U_{j}^{n,+} - U_{j-1}^{n,+}\right)^2e^{-\mu x_j}\\
&\;-e^{\mu {\Delta x}}\left(\frac{\Delta t}{{\Delta x}^2}\epsilon^+ \right)\left(1 - \frac{\Delta t}{\Delta x}a^+ - 2\frac{\Delta t}{{\Delta x}^2}\epsilon^+ \right)\frac{\Delta x}{\Delta t} \sum_{j=1}^{J}\left(U_{j}^{n,+} - U_{j-1}^{n,+}\right)^2e^{-\mu x_j}\\
&\; + \left(\frac{\Delta t}{\Delta x}a^+e^{-\mu {\Delta x}} + \left(e^{-\mu {\Delta x}} - 1\right)\left(\frac{\Delta t}{{\Delta x}^2}\epsilon^+ \right)\right)\frac{\Delta x}{\Delta t} \left[\left(U_{0}^{n,+}\right)^2e^{-\mu x_0} - \left(U_{J}^{n,+}\right)^2e^{-\mu x_J}\right]\\
&\; + 2\left(\frac{\Delta t}{{\Delta x}^2}\epsilon^+ \right)\frac{\Delta x}{\Delta t} \left[\left(U_{0}^{n,+}\right)\left(U_{1}^{n,+} - U_{0}^{n,+}\right)e^{-\mu x_0} - \left(U_{J}^{n,+}\right)\left(U_{J+1}^{n,+} - U_{J}^{n,+}\right)e^{-\mu x_J}\right]\\
&\;- \left(\frac{\Delta t}{{\Delta x}^2}\epsilon^+ \right)\left(\frac{\Delta t}{\Delta x}a^+ + 2\frac{\Delta t}{{\Delta x}^2}\epsilon^+ \right)\frac{\Delta x}{\Delta t} \left[\left(U_{1}^{n,+} - U_{0}^{n,+}\right)^2e^{-\mu x_0} - \left(U_{J+1}^{n,+} - U_{J}^{n,+}\right)^2e^{-\mu x_J}\right],
\end{split}
\end{equation} 
and 
\begin{equation}\label{eq:AppxDtLyafun007}
\begin{split}
D_{j}^{n,-} \leq&\; -\left[\left(\frac{\Delta t}{\Delta x}|a^-| + 2\frac{\Delta t}{{\Delta x}^2}\epsilon^- \right) - \left(\frac{\Delta t}{\Delta x}|a^-| + \frac{\Delta t}{{\Delta x}^2}\epsilon^- \right)e^{-\mu {\Delta x}} - \left(\frac{\Delta t}{{\Delta x}^2}\epsilon^- \right)e^{\mu {\Delta x}}\right]\frac{\Delta x}{\Delta t} \sum_{j=1}^{J}\left(U_{j}^{n,-}\right)^2e^{\mu x_j}\\
&\;-\left(\frac{\Delta t}{\Delta x}|a^-| + \frac{\Delta t}{{\Delta x}^2}\epsilon^- \right)\left(1 - \frac{\Delta t}{\Delta x}|a^-| - 2\frac{\Delta t}{{\Delta x}^2}\epsilon^- \right)\frac{\Delta x}{\Delta t} \sum_{j=1}^{J}\left(U_{j+1}^{n,-} - U_{j}^{n,-}\right)^2e^{\mu x_j}\\
&\;-e^{\mu {\Delta x}}\left(\frac{\Delta t}{{\Delta x}^2}\epsilon^- \right)\left(1 - \frac{\Delta t}{\Delta x}|a^-| - 2\frac{\Delta t}{{\Delta x}^2}\epsilon^- \right)\frac{\Delta x}{\Delta t} \sum_{j=1}^{J}\left(U_{j+1}^{n,-} - U_{j}^{n,-}\right)^2e^{\mu x_j}\\
&\;+\left(\frac{\Delta t}{\Delta x}|a^-|e^{-\mu {\Delta x}}  + \left(e^{-\mu {\Delta x}} -1\right)\frac{\Delta t}{{\Delta x}^2}\epsilon^- \right)\frac{\Delta x}{\Delta t} \left[ \left(U_{J+1}^{n,-}\right)^2e^{\mu x_{J+1}} - \left(U_{1}^{n,-}\right)^2e^{\mu x_1}\right]\\
&\; + 2\left(\frac{\Delta t}{{\Delta x}^2}\epsilon^- \right)\frac{\Delta x}{\Delta t} \left[\left(U_{J+1}^{n,-}\right)\left(U_{J+1}^{n,-} - U_{J}^{n,-}\right)e^{\mu x_{J+1}} - \left(U_{1}^{n,-}\right)\left(U_{1}^{n,-} - U_{0}^{n,-}\right)e^{\mu x_1} \right]\\
&\;-\left(\frac{\Delta t}{{\Delta x}^2}\epsilon^- \right)\left(\frac{\Delta t}{\Delta x}|a^-| + 2\frac{\Delta t}{{\Delta x}^2}\epsilon^- \right)\frac{\Delta x}{\Delta t} \left[\left(U_{J+1}^{n,-} - U_{J}^{n,-}\right)^2e^{\mu x_{J+1}} - \left(U_{1}^{n,-} - U_{0}^{n,-}\right)^2e^{\mu x_1}\right].
\end{split}
\end{equation}

From \eqref{eq:AppxDtLyafun006}, \eqref{eq:AppxDtLyafun007}, for $ n = 0,1,\ldots $, the discrete time derivative \eqref{eq:AppxDtLyafun01} is estimated as
\begin{equation}\label{eq:AppxDtLyafun008}
\begin{split}
\frac{\mathcal{L}^{n+1} - \mathcal{L}^{n}}{\Delta t} \leq&\; {\Delta x}  \sum_{j=1}^{J} U_{j}^{n \top} Q_j(\mu) U_{j}^{n}\\
&\;+ \begin{pmatrix} U_{0}^{n,+}  \\ U_{J+1}^{n,-} \end{pmatrix}^\top \left(\begin{smallmatrix}\left(a^+ + \frac{\epsilon^+}{{\Delta x}}\left(e^{-\mu {\Delta x}} -1\right)e^{\mu {\Delta x}} \right)e^{-\mu x_{1}} & 0 \\ 0 &  \left(|a^-| + \frac{\epsilon^-}{{\Delta x}}\left(e^{-\mu {\Delta x}} -1\right)e^{\mu {\Delta x}} \right)e^{\mu x_{J}} \end{smallmatrix} \right)\begin{pmatrix} U_{0}^{n,+}  \\ U_{J+1}^{n,-} \end{pmatrix}\\
&\;- \begin{pmatrix} U_{J}^{n,+}  \\ U_{1}^{n,-} \end{pmatrix}^\top \left(\begin{smallmatrix}\left(a^+ + \frac{\epsilon^+}{{\Delta x}}\left(e^{-\mu {\Delta x}} -1\right)e^{\mu {\Delta x}} \right)e^{-\mu x_{J+1}} & 0 \\ 0 &  \left(|a^-| + \frac{\epsilon^-}{{\Delta x}}\left(e^{-\mu {\Delta x}} -1\right)e^{\mu {\Delta x}} \right)e^{\mu x_{0}} \end{smallmatrix} \right)\begin{pmatrix} U_{J}^{n,+}  \\ U_{1}^{n,-} \end{pmatrix}\\
&\;-2 \begin{pmatrix} U_{0}^{n,+}  \\ U_{J+1}^{n,-} \end{pmatrix}^\top \left(\begin{smallmatrix} \frac{\epsilon^+}{{\Delta x}}e^{-\mu x_{0}} & 0 \\ 0 &  -\frac{\epsilon^-}{{\Delta x}}e^{\mu x_{J+1}} \end{smallmatrix} \right)\begin{pmatrix} U_{1}^{n,+} - U_{0}^{n,+} \\ U_{J+1}^{n,-} - U_{J}^{n,-}\end{pmatrix}\\
&\;+2 \begin{pmatrix} U_{J}^{n,+}  \\ U_{1}^{n,-} \end{pmatrix}^\top \left(\begin{smallmatrix} \frac{\epsilon^+}{{\Delta x}}e^{-\mu x_{J}} & 0 \\ 0 &  -\frac{\epsilon^-}{{\Delta x}}e^{\mu x_{1}} \end{smallmatrix} \right)\begin{pmatrix} U_{J+1}^{n,+} - U_{J}^{n,+} \\ U_{1}^{n,-} - U_{0}^{n,-}\end{pmatrix}\\
&\;-\begin{pmatrix} U_{1}^{n,+} - U_{0}^{n,+} \\ U_{J+1}^{n,-} - U_{J}^{n,-} \end{pmatrix}^\top \left(\begin{smallmatrix} \frac{\epsilon^+}{{\Delta x}}\left(a^+ + 2\frac{\epsilon^+}{\Delta x} \right) e^{-\mu x_{0}} & 0 \\ 0 &  \frac{\epsilon^-}{{\Delta x}}\left(|a^-| + 2\frac{\epsilon^-}{\Delta x} \right)e^{\mu x_{J+1}} \end{smallmatrix} \right)\begin{pmatrix} U_{1}^{n,+} - U_{0}^{n,+} \\ U_{J+1}^{n,-} - U_{J}^{n,-} \end{pmatrix}\\
&\;+\begin{pmatrix} U_{J+1}^{n,+} - U_{J}^{n,+} \\ U_{1}^{n,-} - U_{0}^{n,-} \end{pmatrix}^\top \left(\begin{smallmatrix} \frac{\epsilon^+}{{\Delta x}}\left(a^+ + 2\frac{\epsilon^+}{\Delta x} \right) e^{-\mu x_{J}} & 0 \\ 0 &  \frac{\epsilon^-}{{\Delta x}}\left(|a^-| + 2\frac{\epsilon^-}{\Delta x} \right)e^{\mu x_{1}} \end{smallmatrix} \right)\begin{pmatrix}U_{J+1}^{n,+} - U_{J}^{n,+} \\ U_{1}^{n,-} - U_{0}^{n,-} \end{pmatrix}
\end{split}
\end{equation}
where 
\begin{equation*}
Q_j(\mu) := \text{diag}\{q^+(\mu),\; q^-(\mu) \}P_j(\mu),\quad j = 1, \ldots, J,
\end{equation*}
with 
\begin{equation*}
\begin{split}
q^+(\mu):=&\;-\frac{1}{\Delta t}\left[\left(\frac{\Delta t}{\Delta x}a^+ + 2\frac{\Delta t}{{\Delta x}^2}\epsilon^+ \right) -  \left(\frac{\Delta t}{\Delta x}a^+ + \frac{\Delta t}{{\Delta x}^2}\epsilon^+ \right)e^{-\mu {\Delta x}} - \left(\frac{\Delta t}{{\Delta x}^2}\epsilon^+ \right)e^{\mu {\Delta x}}\right], \\
=&\;\left(e^{-\mu {\Delta x}} - 1\right)\left[\frac{a^+}{\Delta x}  - \left(\frac{\epsilon^+}{{\Delta x}^2} \right)\left(e^{\mu {\Delta x}} - 1\right)\right],\\
q^-(\mu):=&\; -\frac{1}{\Delta t}\left[\left(\frac{\Delta t}{\Delta x}|a^-| + 2\frac{\Delta t}{{\Delta x}^2}\epsilon^- \right) -  \left(\frac{\Delta t}{\Delta x}|a^-| + \frac{\Delta t}{{\Delta x}^2}\epsilon^- \right)e^{-\mu {\Delta x}} - \left(\frac{\Delta t}{{\Delta x}^2}\epsilon^- \right)e^{\mu {\Delta x}}\right],\\
=&\;\left(e^{-\mu {\Delta x}} - 1\right)\left[\frac{|a^-|}{\Delta x}  - \left(\frac{\epsilon^-}{{\Delta x}^2} \right)\left(e^{\mu {\Delta x}} - 1\right)\right]. 
\end{split}
\end{equation*}

We use the boundary conditions \eqref{eq:discModifiedlinsystemBCs} in the inequality \eqref{eq:AppxDtLyafun008} to obtain  
\begin{equation}\label{eq:AppxDtLyafun09}
\begin{split}
\frac{\mathcal{L}^{n+1} - \mathcal{L}^{n}}{\Delta t} \leq &\; {\Delta x}  \sum_{j=1}^{J} U_{j}^{n \top} Q_j(\mu) U_{j}^{n}\\
&\begin{aligned}
+ \begin{pmatrix} U_{J}^{n,+}  \\ U_{1}^{n,-} \end{pmatrix}^\top & \left(K^\top \left(\begin{smallmatrix}\left(a^+ + \frac{\epsilon^+}{{\Delta x}}\left(e^{-\mu {\Delta x}} -1\right)e^{\mu {\Delta x}} \right)e^{-\mu x_{1}} & 0 \\ 0 &  \left(|a^-| + \frac{\epsilon^-}{{\Delta x}}\left(e^{-\mu {\Delta x}} -1\right)e^{\mu {\Delta x}} \right)e^{\mu x_{J}} \end{smallmatrix} \right)K \right. \\
&\left. - \left(\begin{smallmatrix}\left(a^+ + \frac{\epsilon^+}{{\Delta x}}\left(e^{-\mu {\Delta x}} -1\right)e^{\mu {\Delta x}} \right)e^{-\mu x_{J+1}} & 0 \\ 0 &  \left(|a^-| + \frac{\epsilon^-}{{\Delta x}}\left(e^{-\mu {\Delta x}} -1\right)e^{\mu {\Delta x}} \right)e^{\mu x_{0}} \end{smallmatrix} \right) \right)\begin{pmatrix} U_{J}^{n,+}  \\ U_{1}^{n,-} \end{pmatrix}
\end{aligned}\\
&- \begin{pmatrix} U_{J}^{n,+}  \\ U_{1}^{n,-} \end{pmatrix}^\top \left(K^\top \left(\begin{smallmatrix} \frac{\epsilon^+}{{\Delta x}}e^{-\mu x_{0}} & 0 \\ 0 &  -\frac{\epsilon^-}{{\Delta x}}e^{\mu x_{J+1}} \end{smallmatrix} \right) K   - \left(\begin{smallmatrix} \frac{\epsilon^+}{{\Delta x}}e^{-\mu x_{J}} & 0 \\ 0 &  -\frac{\epsilon^-}{{\Delta x}}e^{\mu x_{1}} \end{smallmatrix} \right)\right) \begin{pmatrix} U_{J+1}^{n,+} - U_{J}^{n,+} \\ U_{1}^{n,-} - U_{0}^{n,-} \end{pmatrix}\\
&\begin{aligned}
- \begin{pmatrix} U_{J+1}^{n,+} - U_{J}^{n,+} \\ U_{1}^{n,-} - U_{0}^{n,-} \end{pmatrix}^\top & \left(K^\top \left(\begin{smallmatrix} \frac{\epsilon^+}{{\Delta x}}\left(a^+ + 2\frac{\epsilon^+}{\Delta x} \right) e^{-\mu x_{0}} & 0 \\ 0 &  \frac{\epsilon^-}{{\Delta x}}\left(|a^-| + 2\frac{\epsilon^-}{\Delta x} \right)e^{\mu x_{J+1}} \end{smallmatrix} \right)K \right. \\
&\left. - \left(\begin{smallmatrix} \frac{\epsilon^+}{{\Delta x}}\left(a^+ + 2\frac{\epsilon^+}{\Delta x} \right) e^{-\mu x_{J}} & 0 \\ 0 &  \frac{\epsilon^-}{{\Delta x}}\left(|a^-| + 2\frac{\epsilon^-}{\Delta x} \right)e^{\mu x_{1}} \end{smallmatrix} \right) \right)\begin{pmatrix} U_{J+1}^{n,+} - U_{J}^{n,+} \\ U_{1}^{n,-} - U_{0}^{n,-} \end{pmatrix}
\end{aligned},\; n = 0,1,\ldots.
\end{split}
\end{equation}

By \eqref{eq:NumThm01}, \eqref{eq:NumThm02} and \eqref{eq:NumThm03}, we have 
\begin{equation}\label{eq:AppxDtLyafun10}
\frac{\mathcal{L}^{n+1} - \mathcal{L}^{n}}{\Delta t} \leq {\Delta x} \sum_{j=1}^{J} U_{j}^{n \top} Q_j(\mu) U_{j}^{n},\quad n = 0,1,\ldots.
\end{equation}

For $ \mu {\Delta x} \geq 0 $, using Taylor's expansion, we estimate 
\begin{equation}\label{eq:AppxDtLyafun11}
e^{-\mu {\Delta x}} - 1 \leq -\mu {\Delta x}e^{-\mu {\Delta x}},\quad \text{and} \quad e^{\mu {\Delta x}} - 1 \leq \mu {\Delta x}e^{\mu {\Delta x}}.
\end{equation} 
From \eqref{eq:AppxDtLyafun11}, we have 
\begin{equation}\label{eq:AppxDtLyafun12}
\begin{split}
&\frac{a^+}{\Delta x}  - \left(\frac{\epsilon^+}{{\Delta x}^2} \right)\left(e^{\mu {\Delta x}} - 1\right) \geq \frac{1}{\Delta x} \left(a^+  - \epsilon^+\mu e^{\mu {\Delta x}}\right),\quad \text{and}\\
&\frac{|a^-|}{\Delta x}  - \left(\frac{\epsilon^-}{{\Delta x}^2} \right)\left(e^{\mu {\Delta x}} - 1\right) \geq \frac{1}{\Delta x} \left(|a^-|  - \epsilon^-\mu e^{\mu {\Delta x}}\right).   
\end{split}
\end{equation}
Further, from \eqref{eq:AppxDtLyafun11} and \eqref{eq:AppxDtLyafun12}, if $$ \mu e^{\mu {\Delta x}} \leq \frac{\alpha}{\epsilon}, $$
then
\begin{equation}\label{eq:AppxDtLyafun13}
Q_j(\mu) + \left( \alpha\mu e^{-\mu {\Delta x}} - \epsilon\mu^2 \right)P_j(\mu),
\end{equation}
is negative definite for all $j = 1,\ldots,J$. Therefore, for $ \mu > 0 $ such that $ \mu e^{\mu {\Delta x}} \leq \frac{\alpha}{\epsilon} $, we have 
\begin{equation}\label{eq:AppxDtLyafun14}
\frac{\mathcal{L}^{n+1} - \mathcal{L}^{n}}{\Delta t} \leq -\eta_N \mathcal{L}^{n},\quad n = 0,1,\ldots,
\end{equation}
where \begin{equation}\label{eqn:eta_N} \eta_N := \alpha\mu e^{-\mu {\Delta x}} - \epsilon\mu^2.\end{equation} Hence, 
\begin{equation}
\mathcal{L}^{n+1} \leq \left(1 - {\Delta t}\eta_N\right)\mathcal{L}^{n} \leq \left(1 - {\Delta t}\eta_N\right)^{n+1}\mathcal{L}^{0} \leq e^{-\eta_N t^{n+1}}\mathcal{L}^{0},\quad n = 0,1,\ldots. \label{eq:AppxDtLyafun-system-08}  
\end{equation}
Thus it can be concluded that by the discrete norm equivalence \eqref{eq:DiscNormEquivalence}, we can obtain the condition for exponential stability \eqref{eq:DiscreteExpStability}. 
\end{proof}

Having proved the main result above, in the next section we present some numerical results which will demonstrate the theoretical results given above.

\section{Numerical computations}\label{sec3}
In this section, we illustrate Theorem \ref{NumericalStabThem} and show the influence of artificial viscosity on the decay of the solution of $ 2\times 2 $ linear systems of conservation laws in one dimension. In particular, we study this for the linear wave equation in Section \ref{sec:linearwave}, the isothermal Euler's equations in Section \ref{sec:isothermalEuler} and the Saint-Venant equations in Section \ref{sec:SaintVenant}. In the examples below, we employ a matrix 
\begin{equation*}
K= \begin{pmatrix} e^{-\mu/2} & 0 \\ 0 & e^{-\mu/2} \end{pmatrix}, \quad \mu > 0
\end{equation*}
which satisfies the conditions \eqref{eq:NumThm01}, \eqref{eq:NumThm02}, \eqref{eq:NumThm03} given in Theorem \ref{NumericalStabThem}. 

Most importantly, it can be observed that as $ {\Delta x} \rightarrow 0 $, for the CFL condition $ 0 <  \frac{\Delta t}{\Delta x}\max\{ a^+,\; |a^-|\} \leq 1$ and for $ \mu > 0 $, $ \epsilon \rightarrow 0 $ and the decay rates, $ \eta_T $ in Equation \eqref{eqn:eta_T} and $ \eta_N $ in Equation \eqref{eqn:eta_N} converge to the expected decay rate, $ \alpha \mu $. Further as $ \mu $ is increased, the decay rate also increases for the given matrix $ K $. Note that in the computations below, we use a stopping criterion with a tolerance of $ 10^{-07} $ for time $ T $. 

\subsection{Example 1 - linear wave equation}\label{sec:linearwave}
The wave equation
\begin{equation}\label{WaveEqn}
\partial_{tt} w(t,x)  =c^2 \partial_{xx} w(t,x), \quad (t,x)\in [0,\infty)\times [0,1],
\end{equation}
where $ w(x,t)$ is the wave function, and $ c = 1 $ the characteristic phase velocity, can be written as system \eqref{linSystem} with $ a^+ = 1 $, $ a^- = -1 $, and the Riemann invariants can be obtained by
\eqref{RiemannCoordinates} as 
\begin{equation*}
U^+:= -\partial_t w + \partial_x w, \quad U^-:= \partial_t w + \partial_x w. 
\end{equation*} 

Since $ a^+ = |a^-| = 1 $, $ \alpha = \min\{ a^+,\; |a^-|\} = 1 $, the theoretical and numerical decay rates are given by $\eta_T = \mu - \epsilon\mu^2 $, see Equation \eqref{eqn:eta_T}, and $\eta_N = \mu e^{-\mu {\Delta x}} - \epsilon\mu^2 $, see Equation \eqref{eqn:eta_N}, with the diffusion coefficient $ \epsilon = {\frac{1}{2} \dx\Big(1 - \frac{\dt}{\dx}\Big)} $  and the CFL condition $ 0 < {\frac{\dt}{\dx}} \leq 1 $.  The discrete system \eqref{eq:discModified} approximates \eqref{WaveEqn}, and the discrete Lyapunov function is bounded above by 
\begin{equation}\label{UpperBound}
\mathcal{L}^{n}_{\text{up}} = e^{-\eta_N t^n}\mathcal{L}^{0}, \; n = 0,1, \ldots. 
\end{equation} 

We first take a constant initial data $ U_j^{0,+} = -0.5$, $U_j^{0,-} = 0.5,\; j = 1, \ldots,J $. For the choice of the CFL condition $ 0.95 $ or $ 0.5 $, $ \mu = 0.5 $, maximum time run $ T = 12 $ and the given matrix $ K $,  the $ L_{\Delta x}^\infty $  and $ L_{\Delta x}^2 $ difference between the upper bound $ \mathcal{L}_{\text{up}}^{n} $ and $ \mathcal{L}^{n} $, and the rate of convergence of the discrete Lyapunov function $ \mathcal{L}^{n} $ for different number of grid points are computed and summarised in Table \ref{table:Convergence-01} and \ref{table:Convergence-02}.

\begin{table}[H]
	\centering
	\begin{tabular}{ccccccc}
		\hline \\[0.5ex]
		$ J $ & $ \|\mathcal{L}_{\text{up}} - \mathcal{L}^n \|_{L_{\Delta x}^\infty} $ & $ \|\mathcal{L}_{\text{up}} - \mathcal{L}^n \|_{L_{\Delta x}^2} $ & $ \alpha \mu $  & $ \eta_T $ & $ \eta_N $ &  Rate$ (\mathcal{L}^n) $ \\ [0.5ex]
		\hline \\[0.5ex]
		100  & 0.0025	  &	0.0030		 & 0.5	 &	0.4999	& 0.4974 & 2.0298\\[1ex] 	
		200  & 0.0013 	  & 0.0016       & 0.5   &  0.5000  & 0.4987 & 2.0391\\[1ex]
		400  & 7.2780e-04 & 8.9220e-04   & 0.5   &  0.5000  & 0.4994 & 2.0545\\[1ex]
		800  & 4.0501e-04 & 5.0287e-04   & 0.5   &  0.5000  & 0.4997 & 2.0785\\[1ex]
		1600 & 2.2993e-04 & 2.9221e-04   & 0.5   &  0.5000  & 0.4998 & 2.1153\\[0.5ex]
		\hline \\
	\end{tabular} 
	\caption[]%
	{The comparison of the upper bound of Lyapunov function with discrete Lyapunov function. Under $ {\Delta x} = \frac{1}{J} $, $ {\Delta t} = \text{CFL} {\Delta x}$, CFL = 0.95, $ \mu = 0.5 $, and $ T = 12 $. Rate$ (\mathcal{L}^n) $ = $ \|\mathcal{L}_{J}^n - \mathcal{L}_{2J}^n \|_{L_{\Delta x}^2}/\|\mathcal{L}_{2J}^n - \mathcal{L}_{4J}^n \|_{L_{\Delta x}^2} $.}
	\label{table:Convergence-01}
\end{table} 

\begin{table}[H]
	\centering
	\begin{tabular}{ccccccc}
		\hline \\[0.5ex]
		$ J $ & $ \|\mathcal{L}_{\text{up}} - \mathcal{L}^n \|_{L_{\Delta x}^\infty} $ & $ \|\mathcal{L}_{\text{up}} - \mathcal{L}^n \|_{L_{\Delta x}^2} $ & $ \alpha \mu $  & $ \eta_T $ & $ \eta_N $ &  Rate$ (\mathcal{L}^n) $ \\ [0.5ex]
		\hline \\[0.5ex]
		100  & 0.0027	  &	0.0052		& 0.5	   &  0.4994  & 0.4969 & 2.0874\\[1ex] 	
		200  & 0.0016     & 0.0031      & 0.5      &  0.4997  & 0.4984 & 2.1270\\[1ex]
		400  & 0.0010     & 0.0019      & 0.5      &  0.4998  & 0.4992 & 2.1922\\[1ex]
		800  & 6.2921e-04 & 0.0012      & 0.5      &  0.4999  & 0.4996 & 2.3004\\[1ex]
		1600 & 4.0021e-04 & 7.9843e-04  & 0.5      &  0.5000  & 0.4998 & 2.4773\\[0.5ex]
		\hline \\
	\end{tabular} 
	\caption[]%
	{The comparison of the upper bound of Lyapunov function with discrete Lyapunov function. Under $ {\Delta x} = \frac{1}{J} $, $ {\Delta t} = \text{CFL} {\Delta x}$, CFL = 0.5, $ \mu = 0.5 $, and $ T = 12 $. Rate$ (\mathcal{L}^n) $ = $ \|\mathcal{L}_{J}^n - \mathcal{L}_{2J}^n \|_{L_{\Delta x}^2}/\|\mathcal{L}_{2J}^n - \mathcal{L}_{4J}^n \|_{L_{\Delta x}^2} $.}
	\label{table:Convergence-02}
\end{table} 

In the above two tables, one observes that with the grid refinement the discrepancy from the upper bound in $\mathcal{L}$ is reduced.  The magnitude of the difference also depends on the CFL condition, the larger the CFL number the smaller are the errors. In addition both $\eta_T$ and $\eta_N$ approach $\alpha \mu$ as expected. One also observes that the expected rate of convergence of the discrete Lyapunov function $ \mathcal{L}^{n} $ is guaranteed. 

For a fixed number of grid points, $ J = 1600 $, different choices of $ \mu > 0 $ and CFL = 0.95, we computed the $ L_{\Delta x}^\infty $  and $ L_{\Delta x}^2 $ difference between the upper bound $ \mathcal{L}_{\text{up}}^{n} $ and $ \mathcal{L}^{n} $  for above constant initial data and for a perturbation of the initial data, $ U_j^{0,+} = -0.5 + \frac{1}{4\pi }\sin\left(2\pi x_j\right)$, $U_j^{0,-} = 0.5 + \frac{1}{4\pi }\sin\left(2\pi x_j\right),\; j = 1, \ldots,J $, respectively. The result is summarized in Table \ref{table:Convergence-03} and  \ref{table:Convergence-04}.  
\begin{table}[H]
	\centering
	\begin{tabular}{cccccc}
		\hline \\[0.5ex]
		$ \mu $ & $ \|\mathcal{L}_{\text{up}} - \mathcal{L}^n \|_{L_{\Delta x}^\infty} $ & $ \|\mathcal{L}_{\text{up}} - \mathcal{L}^n \|_{L_{\Delta x}^2} $ & $ \alpha \mu $  & $ \eta_T $ & $ \eta_N $  \\ [0.5ex]
		\hline \\[0.5ex]
		0.25  & 1.0106e-04	  &	1.6670e-04   & 0.25	 &	0.2500	& 0.2500  \\[1ex] 	
		0.5   & 2.2993e-04 	  & 2.9221e-04   & 0.5   &  0.5000  & 0.4998  \\[1ex]
		1.25  & 7.7158e-04    & 7.8381e-04   & 1.25  &  1.2500  & 1.2490  \\[1ex]
		2.75  & 0.0048        & 0.0034       & 2.7   &  2.7499  & 2.7452  \\[1ex]
		4.5   & 0.0291        & 0.0165       & 4.5   &  4.4997  & 4.4870  \\[0.5ex]
		\hline \\
	\end{tabular} 
	\caption[]%
	{The comparison of the upper bound of Lyapunov function with discrete Lyapunov function for constant initial data. Under $ {\Delta x} = \frac{1}{1600} $, $ {\Delta t} = \text{CFL} {\Delta x}$, CFL = 0.95, and $ T = 35 $.}
	\label{table:Convergence-03}
\end{table} 

\begin{table}[H]
	\centering
	\begin{tabular}{cccccc}
		\hline \\[0.5ex]
		$ \mu $ & $ \|\mathcal{L}_{\text{up}} - \mathcal{L}^n \|_{L_{\Delta x}^\infty} $ & $ \|\mathcal{L}_{\text{up}} - \mathcal{L}^n \|_{L_{\Delta x}^2} $ & $ \alpha \mu $  & $ \eta_T $ & $ \eta_N $  \\ [0.5ex]
		\hline \\[0.5ex]
		0.25  & 1.7610e-04	  &	2.4173e-04   & 0.25	 &	0.2500	& 0.2500  \\[1ex] 	
		0.5   & 2.3129e-04 	  & 2.8947e-04   & 0.5   &  0.5000  & 0.4998  \\[1ex]
		1.25  & 6.6414e-04    & 7.4674e-04   & 1.25  &  1.2500  & 1.2490  \\[1ex]
		2.75  & 0.0046        & 0.0033       & 2.7   &  2.7499  & 2.7452  \\[1ex]
		4.5   & 0.0284        & 0.0159       & 4.5   &  4.4997  & 4.4870  \\[0.5ex]
		\hline \\
	\end{tabular} 
	\caption[]%
	{The comparison of the upper bound of Lyapunov function with discrete Lyapunov function for perturbation of the initial data. Under $ {\Delta x} = \frac{1}{1600} $, $ {\Delta t} = \text{CFL} {\Delta x}$, CFL = 0.95, and $ T = 35 $.}
	\label{table:Convergence-04}
\end{table} 

In Table \ref{table:Convergence-03} and \ref{table:Convergence-04} it can be observed that for the constant initial value and the perturbed initial value, the discrete Lyapunov function is closer to the upper bound for the smaller $\mu$. This explains the fact that the larger values of $\mu$ imply that the matrix $K$ has stronger influence on the Lyapunov function. It can also be noted that the influence of $\mu$ in the decay rate is diminished by the presence of the term $e^{-\mu\Delta x}$ in the first term of $\eta_N$. This seems to be the case even-though the quadratic term would be dominant. Hence the discrete Lyapunov function remains farther from the upper bound for larger $\mu$.  In all cases $\eta_T$ and $\eta_N$ approach $\alpha \mu$. Thus larger values of $\mu$ dominate the discrete Lyapunov function which is also attributed to the choice of $K$.
 
In Figure \ref{fig:LyapunovProfile-01}, we compare different upper-bounds of the Lyapunov function:
\begin{itemize}
\item $\mathcal{L}_{\text{up}}(\alpha\mu)$ - the upper-bound of Equation \eqref{conservationlaw} or Equation \eqref{eq:LyapFunSystem} with $\epsilon = 0$;
\item $\mathcal{L}_{\text{up}}(\eta_T)$ - the upper-bound of the Lyapunov function in Equation \eqref{eq:LyapFunSystem} as established in Lemma \ref{lem:contsStab};
\item $\mathcal{L}_{\text{up}}(\eta_N)$ - the upper-bound of the discrete Lyapunov function in Equation \eqref{conservationlaw} as established in the proof of Theorem \ref{NumericalStabThem}.
\end{itemize}
It also compares the above with the $\mathcal{L}^n$ which is the discrete Lyapunov function stated in Equation \eqref{eq:dLyafun-system}.  It can also be seen that $\alpha\mu \ge \alpha\mu - \epsilon\mu^2 \ge \alpha\mu e^{-\mu \Delta{x}} - \epsilon \mu^2$ hence $\mathcal{L}_{\text{up}}(\alpha\mu) \le \mathcal{L}_{\text{up}}(\eta_T) \le \mathcal{L}_{\text{up}}(\eta_N)$. This is demonstrated in Figure \ref{fig:LyapunovProfile-01}. In the figure $\mathcal{L}^n$, has the fastest decay such that $\mathcal{L}^n \le \mathcal{L}_{\text{up}}(\alpha\mu)$. For fixed $\mu$ this can be attributed to the second-order terms in Equation \eqref{eq:discModifiedlinsystem}. This behaviour is consistent for both problems i.e. with constant or non-constant initial data. We also show that when the values of $ \mu > 0 $ increase the decay rate increases for constant and non-constant initial data in Figure \ref{fig:LyapunovProfile-02}. This can also be attributed to the boundary condition in which the matrix $K$ depends on $\mu$.

\begin{figure}[H]
	\centering
	\begin{subfigure}[b]{0.485\textwidth}
		\centering
		\includegraphics[width=\textwidth]{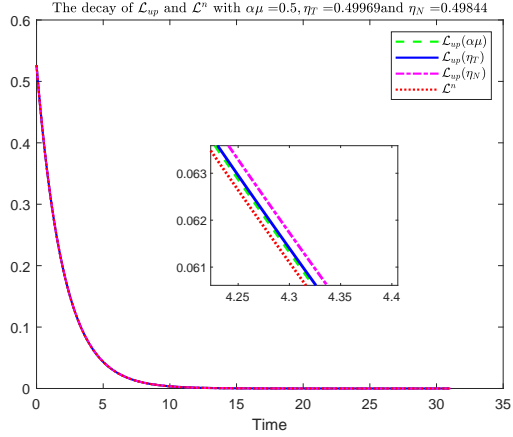}
		\caption[]%
		{ Constant initial data}    
		\label{fig:DecayRateComparion-01-01}
	\end{subfigure}
	\hfill
	\begin{subfigure}[b]{0.485\textwidth}  
		\centering 
		\includegraphics[width=\textwidth]{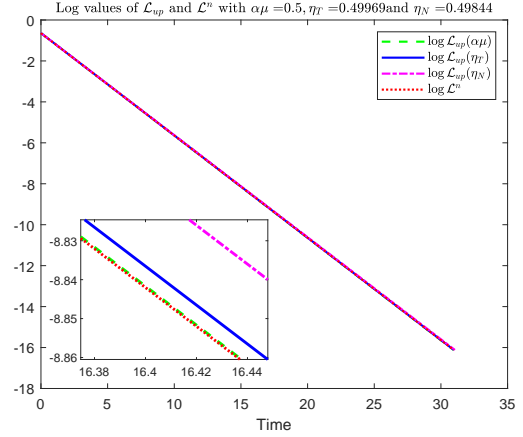}
		\caption[]%
		{ Constant initial data}    
		\label{fig:DecayRateComparion-01-02}
	\end{subfigure}
	\vskip\baselineskip
	\begin{subfigure}[b]{0.485\textwidth}   
		\centering 
		\includegraphics[width=\textwidth]{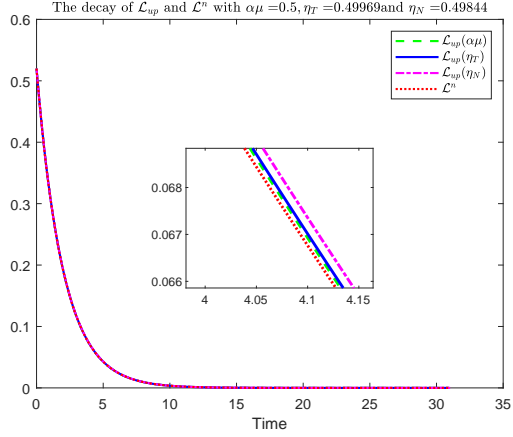}
		\caption[]%
		{Non-constant initial data}    
		\label{fig:DecayRateComparion-01-03}
	\end{subfigure}
	\hfill
	\begin{subfigure}[b]{0.485\textwidth}   
		\centering 
		\includegraphics[width=\textwidth]{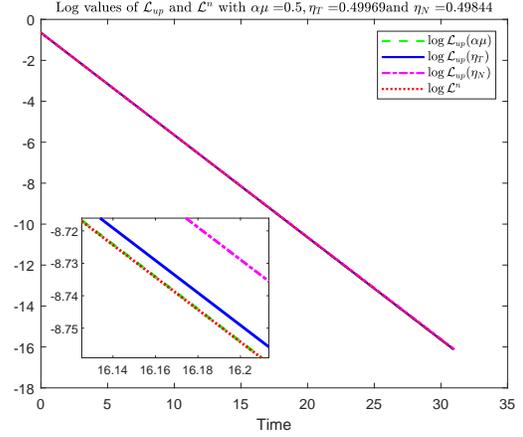}
		\caption[]%
		{Non-constant initial data}    
		\label{fig:DecayRateComparion-01-04}
	\end{subfigure}
	\caption[]%
	{ The comparison of the decay of Lyapunov function and the upper bound in (a) and (c), and its log values in (b) and (d) for constant and non-constant initial data. Under $ {\Delta x} = \frac{1}{200} $, $ {\Delta t} = \text{CFL}{\Delta x}$, CFL = 0.5, $ \mu = 0.5 $ and $ T = 35 $.} 
	\label{fig:LyapunovProfile-01}
\end{figure}

\begin{figure}[H]
	\centering
	\begin{subfigure}[b]{0.485\textwidth}
		\centering
		\includegraphics[width=\textwidth]{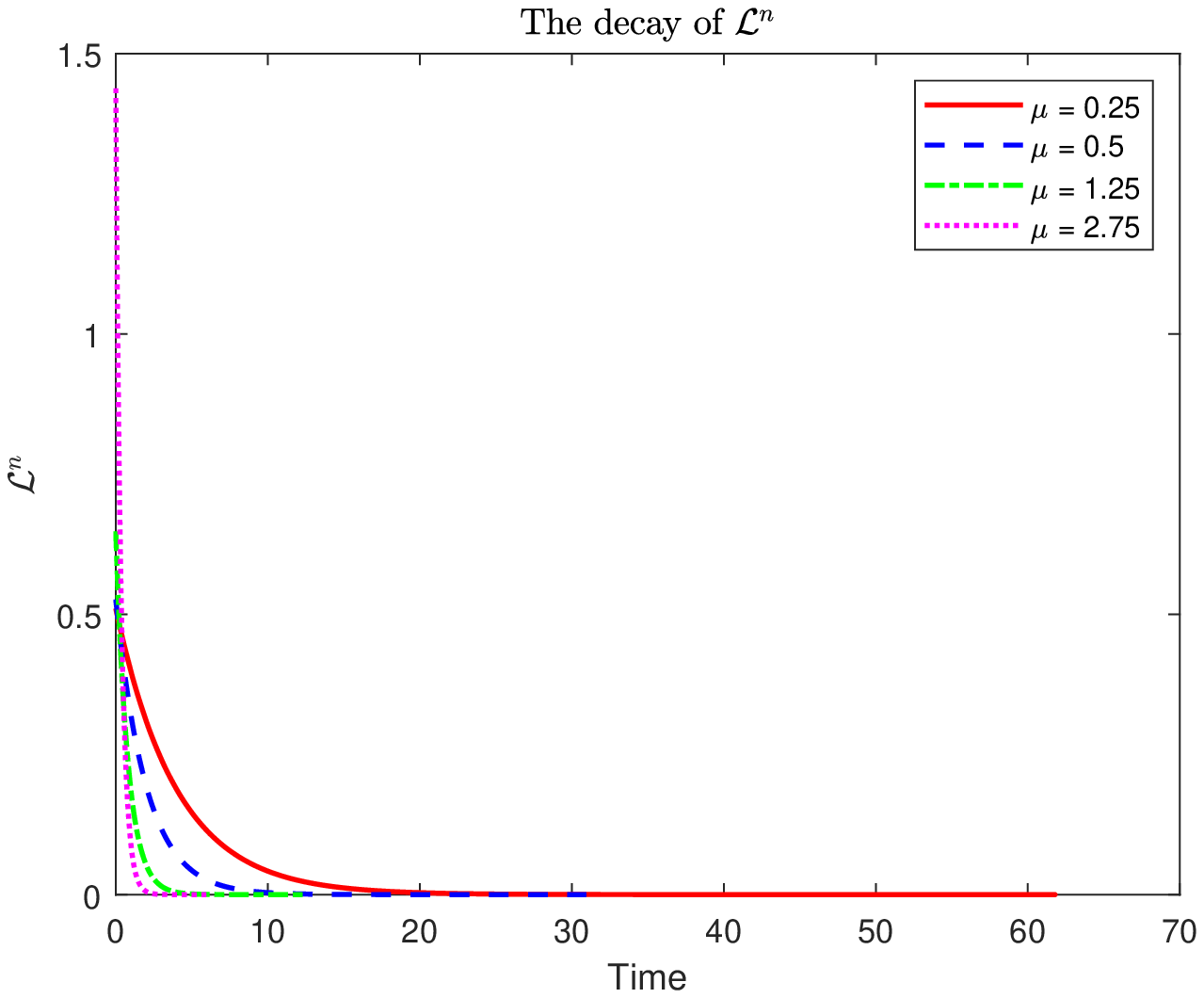}
		\caption[]%
		{ Constant initial data}    
		\label{fig:DecayRateComparion-02-01}
	\end{subfigure}
	\hfill
	\begin{subfigure}[b]{0.485\textwidth}  
		\centering 
		\includegraphics[width=\textwidth]{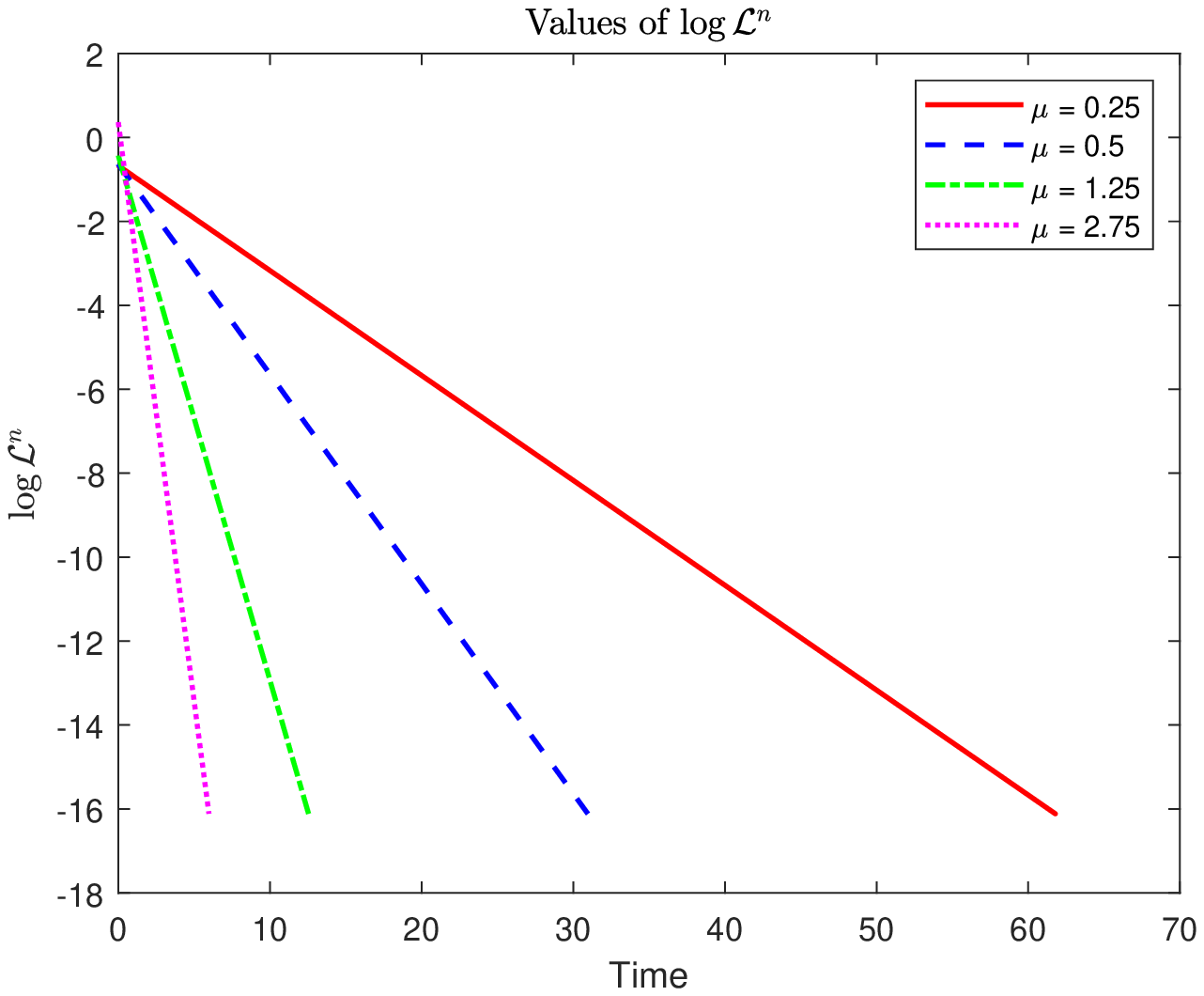}
		\caption[]%
		{ Constant initial data}    
		\label{fig:DecayRateComparion-02-02}
	\end{subfigure}
	\vskip\baselineskip
	\begin{subfigure}[b]{0.485\textwidth}   
		\centering 
		\includegraphics[width=\textwidth]{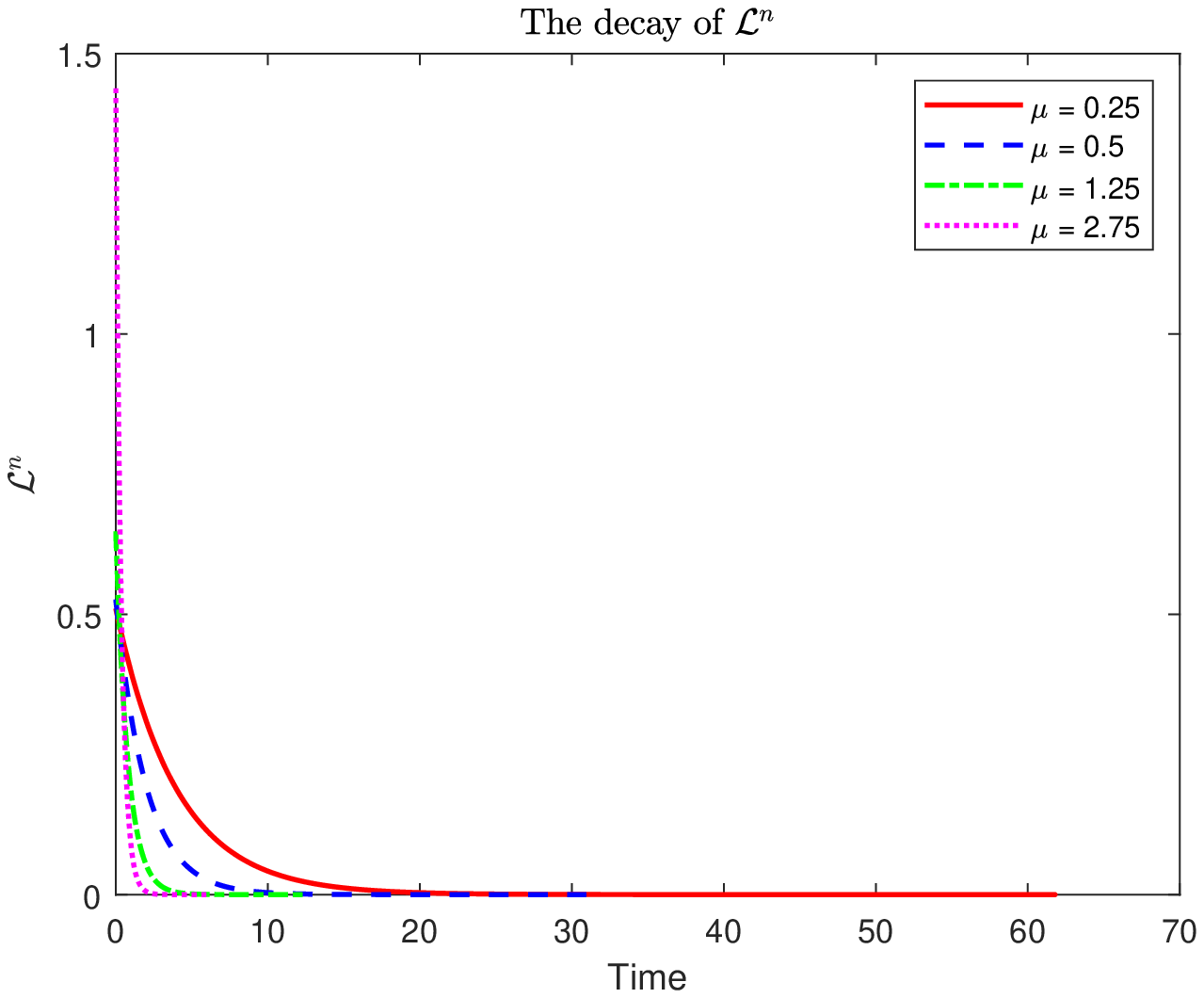}
		\caption[]%
		{Non-constant initial data}    
		\label{fig:DecayRateComparion-02-03}
	\end{subfigure}
	\hfill
	\begin{subfigure}[b]{0.485\textwidth}   
		\centering 
		\includegraphics[width=\textwidth]{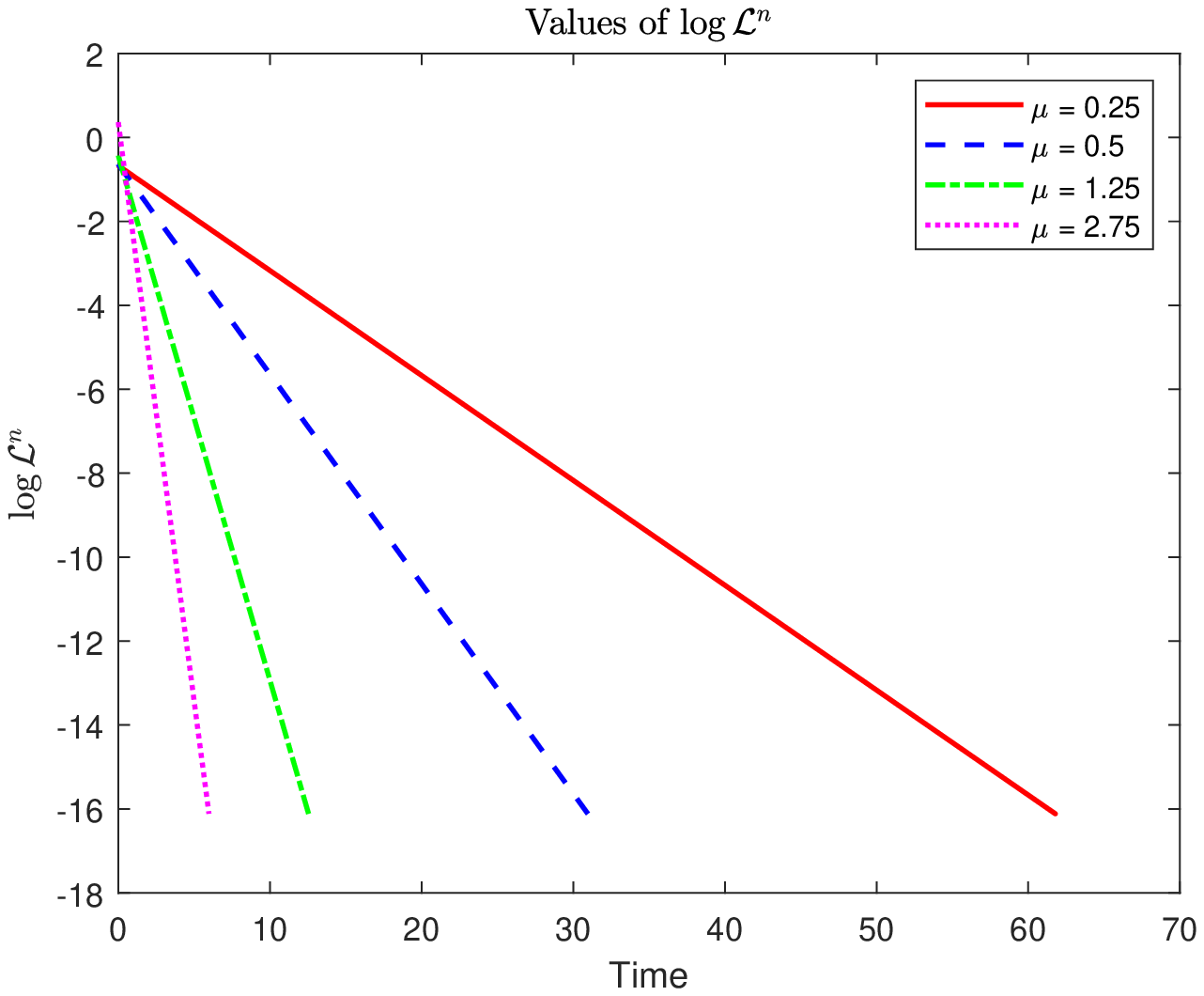}
		\caption[]%
		{Non-constant initial data}    
		\label{fig:DecayRateComparion-02-04}
	\end{subfigure}
	\caption[]%
	{ The comparison of the decay of Lyapunov function in (a) and (c), and its log values in (b) and (d) for different choices of $ \mu > 0 $ and for constant and non-constant initial data. Under $ {\Delta x} = \frac{1}{200} $, $ {\Delta t} = \text{CFL}{\Delta x}$, CFL = 0.5, and $ T = 70 $.} 
	\label{fig:LyapunovProfile-02}
\end{figure}
Of note is the fact that the upper bound due to $\eta_N$ has the slowest decay rate. This is caused by the slight influence of the $e^{-\mu\Delta{x}}$ term. The expected decay $\mathcal(\alpha\mu)$ i.e. $\epsilon = 0$, is almost similar to the theoretical decay with viscosity, $\mathcal{L}(\eta_T)$, i.e. $\text{CFL} < 1$. 

\subsection{Example 2: Isothermal Euler's equations} \label{sec:isothermalEuler}
The following Isothermal Euler Equations (see \cite{Banda_2006})
\begin{equation}\label{eq:Isothermal01}
\begin{split}
\partial_t \rho(t,x) + \partial_xq(t,x) =&\; 0,\\
\partial_tq(t,x) + \partial_x\left( \frac{q^2(t,x)}{\rho(t,x)} + a^2 \rho(t,x)\right) =& \; 0,
\end{split}
\end{equation}
where $ x \in [0,l] $, $ t \in [0, +\infty) $, $ \rho(t,x) $ is the density of a gas,  $ q(t,x):= \rho(t,x) u(t,x) $ is the mass flux, $ u(t,x) $ is the velocity of a gas and $ a $ is the speed of sound, can also be written as system \eqref{linSystem} with $ a^+ = q^*/\rho^* + a $, $ a^- = q^*/\rho^* - a $ (a sub-sonic flow is considered), and the Riemann invariants can be obtained by \eqref{RiemannCoordinates} as 
\begin{equation*}
U^+(t,x):= (q(t,x) - q^*) - a^-(\rho(t,x) - \rho^*), \quad U^-(t,x):= (q(t,x) - q^*) - a^+(\rho(t,x) - \rho^*),
\end{equation*}
where $ \rho^* $, $ q^* $ is a steady-state solution of the system \eqref{eq:Isothermal01} that satisfies 
\begin{equation}\label{eq:Isothermal02}
q^* = \kappa_1,\quad \frac{{q^*}^2}{\rho^*} + a^2 \rho^* = \kappa_2,
\end{equation}
where $ \kappa_1 $ and $ \kappa_2 $ are real constants. Similarly, the discrete systems \eqref{eq:discModified} approximates \eqref{eq:Isothermal01}. 

We take $ a = 1 $, $ q^* = 0.6 $ and $ \rho^* = 3 $. Thus, $ a^+ = 1.2 $ and $ a^- = -0.8 $. Then, we define an initial data by 
\begin{equation*}
 U_j^{0,+} = 0.8e^{-x_j} - 3,\quad U_j^{0,-} = -1.2e^{-x_j} + 3,\; j = 1, \ldots,J.
\end{equation*}

\begin{figure}[H]
	\centering
	\begin{subfigure}[b]{0.495\textwidth}
		\centering
		\includegraphics[width=\textwidth]{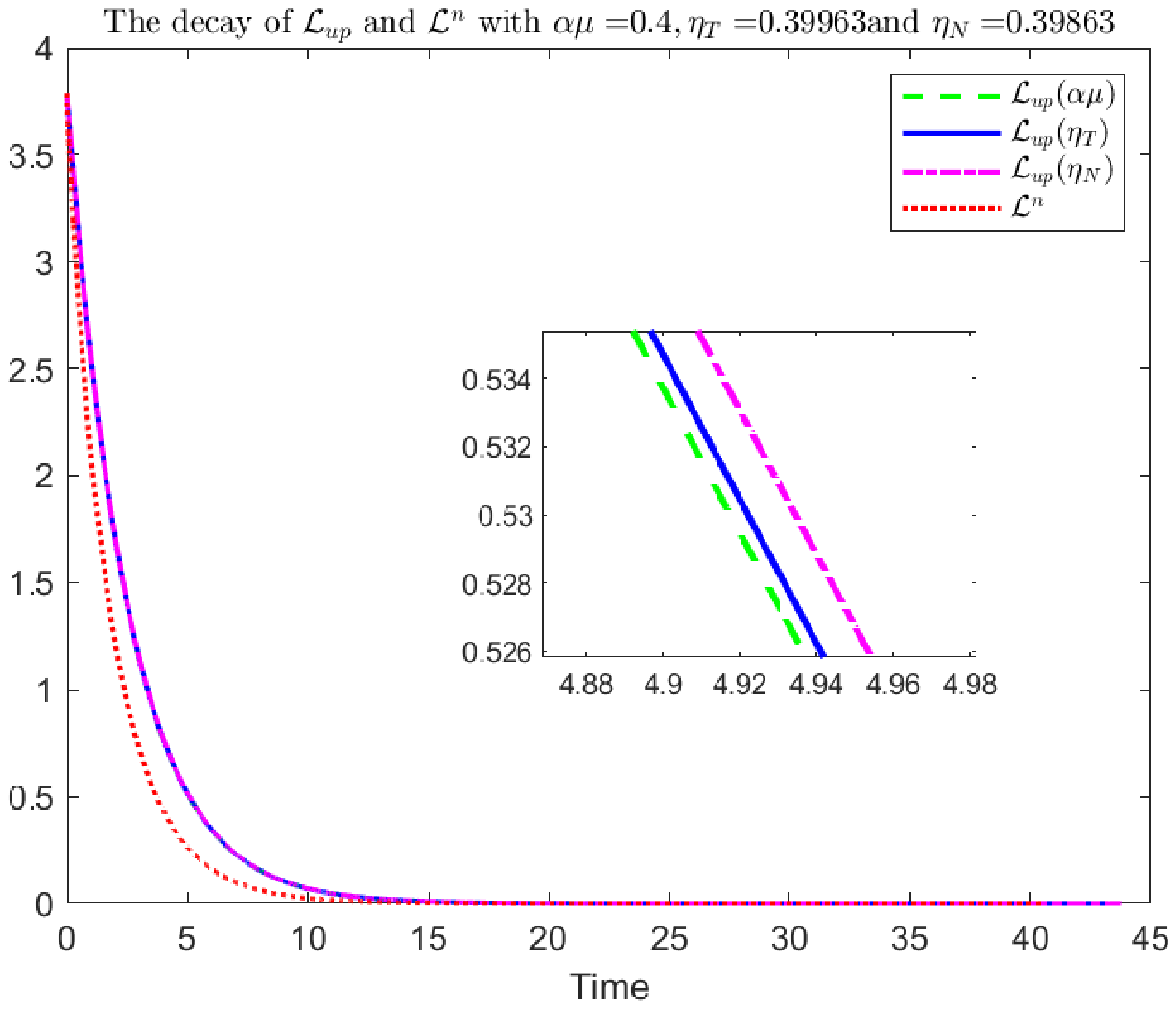}
		%\caption[]{ }    
		\label{fig:DecayRateComparion-03-01}
	\end{subfigure}
	\hfill
	\begin{subfigure}[b]{0.495\textwidth}  
		\centering 
		\includegraphics[width=\textwidth]{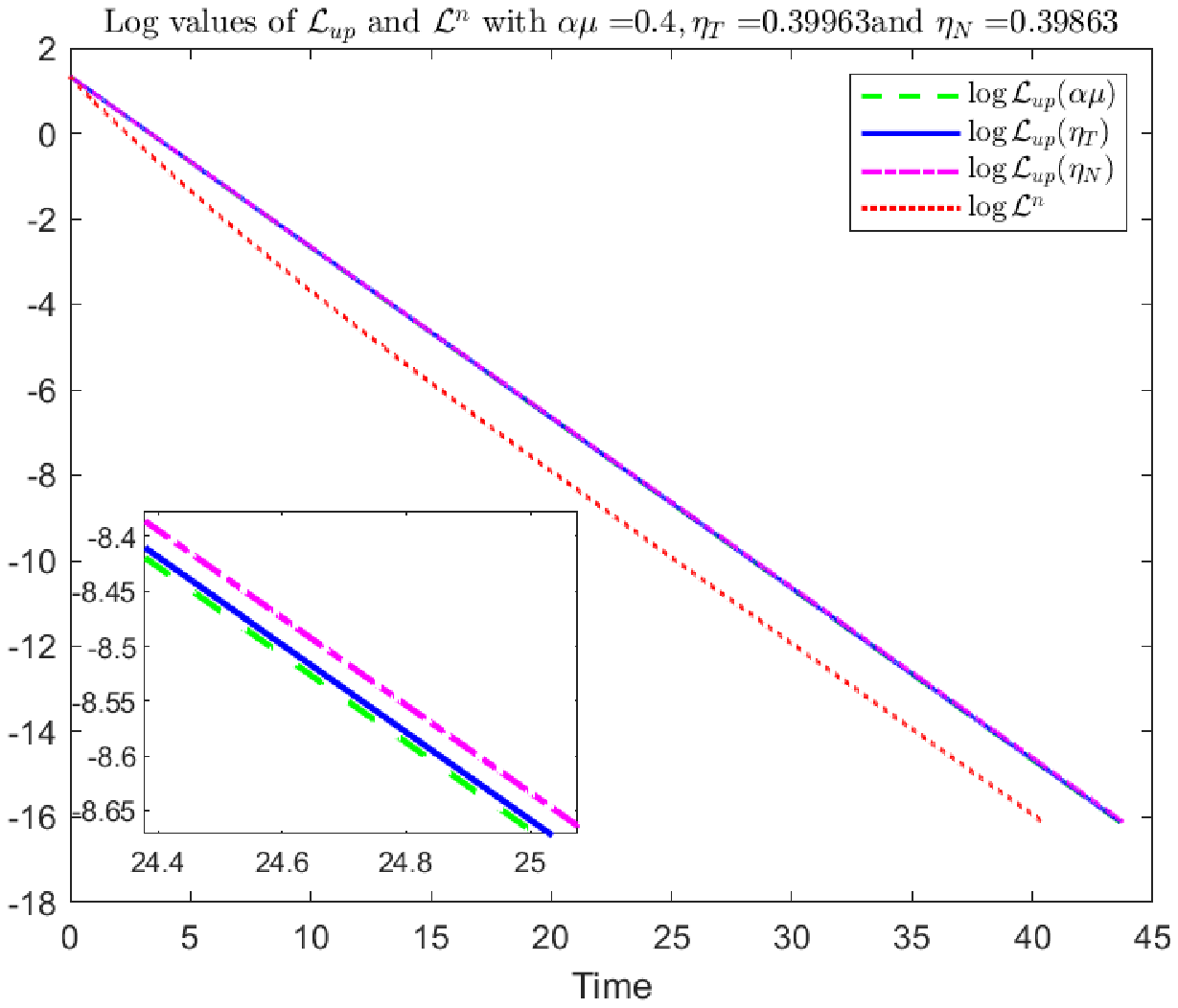}
		%\caption[]{ }    
		\label{fig:DecayRateComparion-03-02}
	\end{subfigure}
	\caption[]%
	{ The comparison of the decay of Lyapunov function and the upper bound in and its log values. Under $ {\Delta x} = \frac{1}{200} $, $ {\Delta t} = \text{CFL}{\Delta x}$, CFL = 0.5, $ \mu = 0.5 $ and $ T = 45 $.} 
	\label{fig:LyapunovProfile-03}
\end{figure}

\begin{figure}[H]
	\centering
	\begin{subfigure}[b]{0.495\textwidth}
		\centering
		\includegraphics[width=\textwidth]{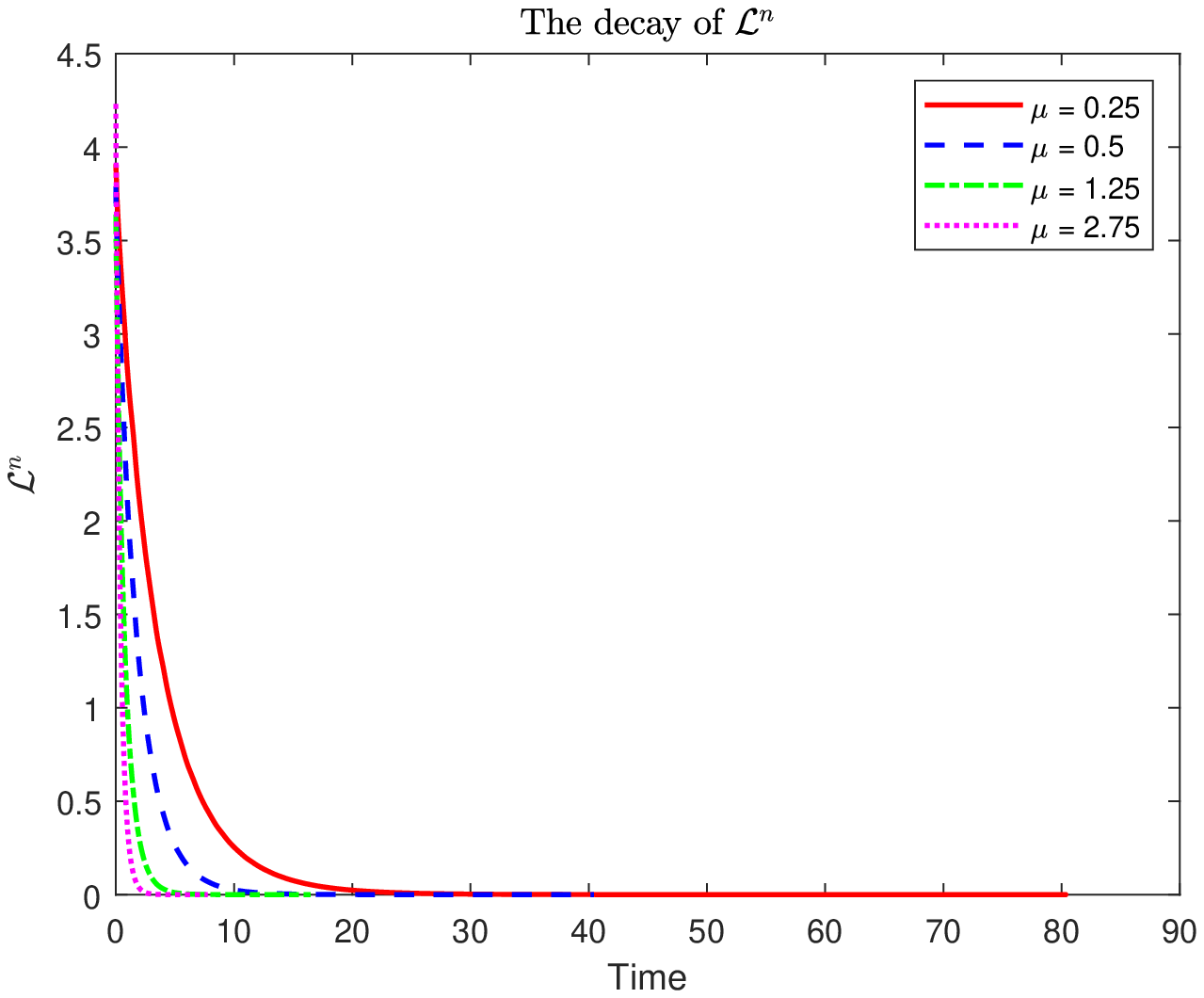}
		%\caption[]{ }    
		\label{fig:DecayRateComparion-04-01}
	\end{subfigure}
	\hfill
	\begin{subfigure}[b]{0.495\textwidth}  
		\centering 
		\includegraphics[width=\textwidth]{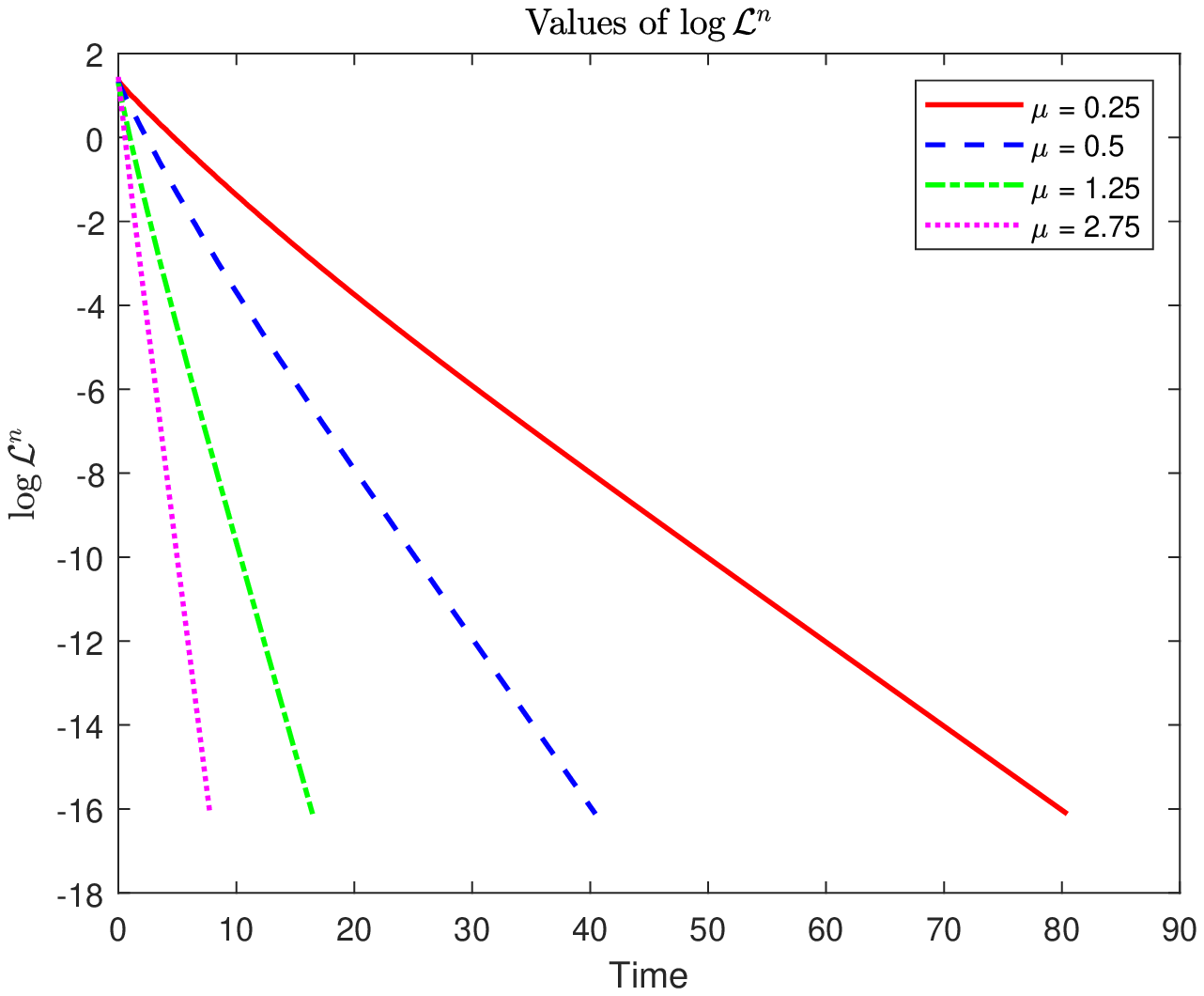}
		%\caption[]{ }    
		\label{fig:DecayRateComparion-04-02}
	\end{subfigure}
	\caption[]%
	{ The comparison of the decay of Lyapunov function and its log values for different choices of $ \mu > 0 $. Under $ {\Delta x} = \frac{1}{200} $, $ {\Delta t} = \text{CFL}{\Delta x}$, CFL = 0.5, and $ T = 90$.} 
	\label{fig:LyapunovProfile-04}
\end{figure}
Similar to Section \ref{sec:linearwave}, in Figure \ref{fig:LyapunovProfile-03}, we compare different upper-bounds of the Lyapunov function with the $\mathcal{L}^n$ which is the discrete Lyapunov function stated in Equation \eqref{eq:dLyafun-system}.  It can also be seen that $\mathcal{L}_{\text{up}}(\alpha\mu) \le \mathcal{L}_{\text{up}}(\eta_T) \le \mathcal{L}_{\text{up}}(\eta_N)$.  For fixed $\mu$, we see that $\mathcal{L}^n$ has the fastest decay rate which can be attributed to the second-order terms in Equation \eqref{eq:discModifiedlinsystem}. We also show that when the values of $ \mu > 0 $ increase the decay rate increases, see Figure \ref{fig:LyapunovProfile-04}. This can also be attributed to the boundary condition in which the matrix $K$ depends on $\mu$.

\subsection{Example 3 - Saint-Venant equations}\label{sec:SaintVenant}
We consider a flow of water along an open channel of prismatic shape of a unit width and length of $ l > 0 $. This flow is modeled by Saint-Venant equations of the form
\begin{equation}\label{eq:ShallowWEqns01}
\begin{split}
\partial_t h(t,x) + \partial_xq(t,x) =&\; 0,\\
\partial_tq(t,x) + \partial_x\left(\frac{q^2(t,x)}{h(t,x)} + \frac{1}{2}g h^2(t,x)\right) =& \; 0,\quad 
\end{split}
\end{equation}
where $ h(t,x) $ is height of water, $ q(t,x):=h(t,x)u(t,x) $ is mass flux, $ u(t,x) $ is the velocity of water and $ g $ is the gravitational constant. The system \eqref{eq:ShallowWEqns01} can be written as the system \eqref{linSystem} with $ a^+ = q^*/h^* + \sqrt{gh^*} $, $ a^- = q^*/h^* - \sqrt{gh^*} $ (a sub-critical flow is considered), and the Riemann invariants can be obtained by \eqref{RiemannCoordinates} as 
\begin{equation*}
U^+(t,x):= (q(t,x) - q^*) - a^-(h(t,x) - h^*), \quad U^-(t,x):= (q(t,x) - q^*) - a^+(h(t,x) - h^*),
\end{equation*}
where $ h^* $, $ q^* $ is a steady-state solution of the system \eqref{eq:ShallowWEqns01} that satisfies 
\begin{equation}\label{eq:ShallowWEqns02}
q^* =\; \kappa_1,\quad \frac{{u^*}^2}{ h^*} + \frac{1}{2}g {h^*}^2 = \; \kappa_2,
\end{equation}
where $ \kappa_1 $ and $ \kappa_2 $ are real constants. Beside this, the discrete system \eqref{eq:discModified} approximates \eqref{eq:ShallowWEqns01}.

We take an equilibrium solution $ q^* = 10 $ and $ h^* = 4 $ with $ g = 9.8 $. Thus, $ a^+ = 8.761 $ and $ a^- = -3.761 $. We define initial data by 
\begin{equation*}
U_j^{0,+} = 10 + 3.761(4 + 0.5\sin(\pi x_j)),\quad U_j^{0,-} = 10 - 8.761(4 + 0.5\sin(\pi x_j)),\; j = 1, \ldots,J.
\end{equation*}

\begin{figure}[H]
	\centering
	\begin{subfigure}[b]{0.495\textwidth}
		\centering
		\includegraphics[width=\textwidth]{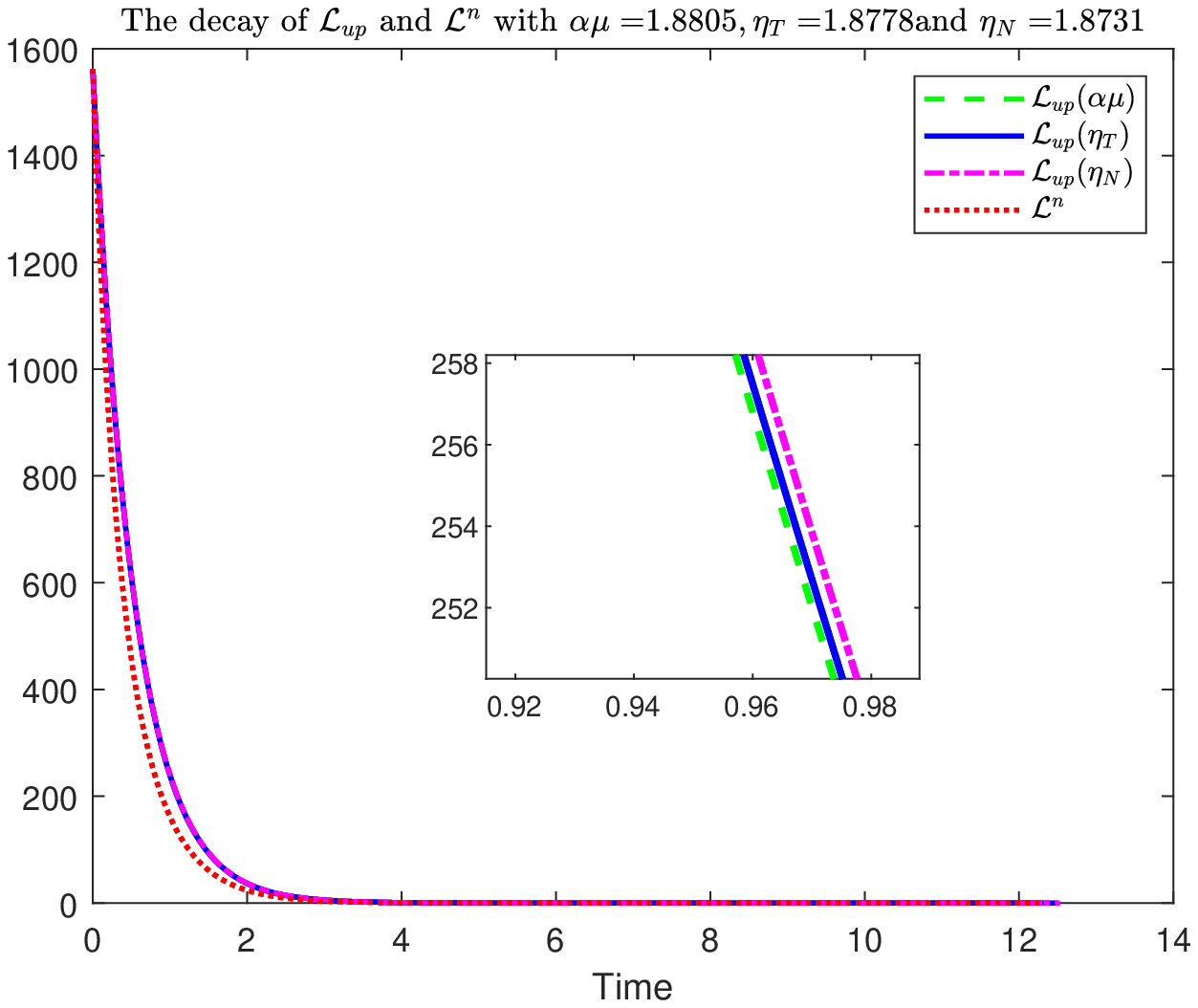}
		%\caption[]{ }    
		\label{fig:DecayRateComparion-05-01}
	\end{subfigure}
	\hfill
	\begin{subfigure}[b]{0.495\textwidth}  
		\centering 
		\includegraphics[width=\textwidth]{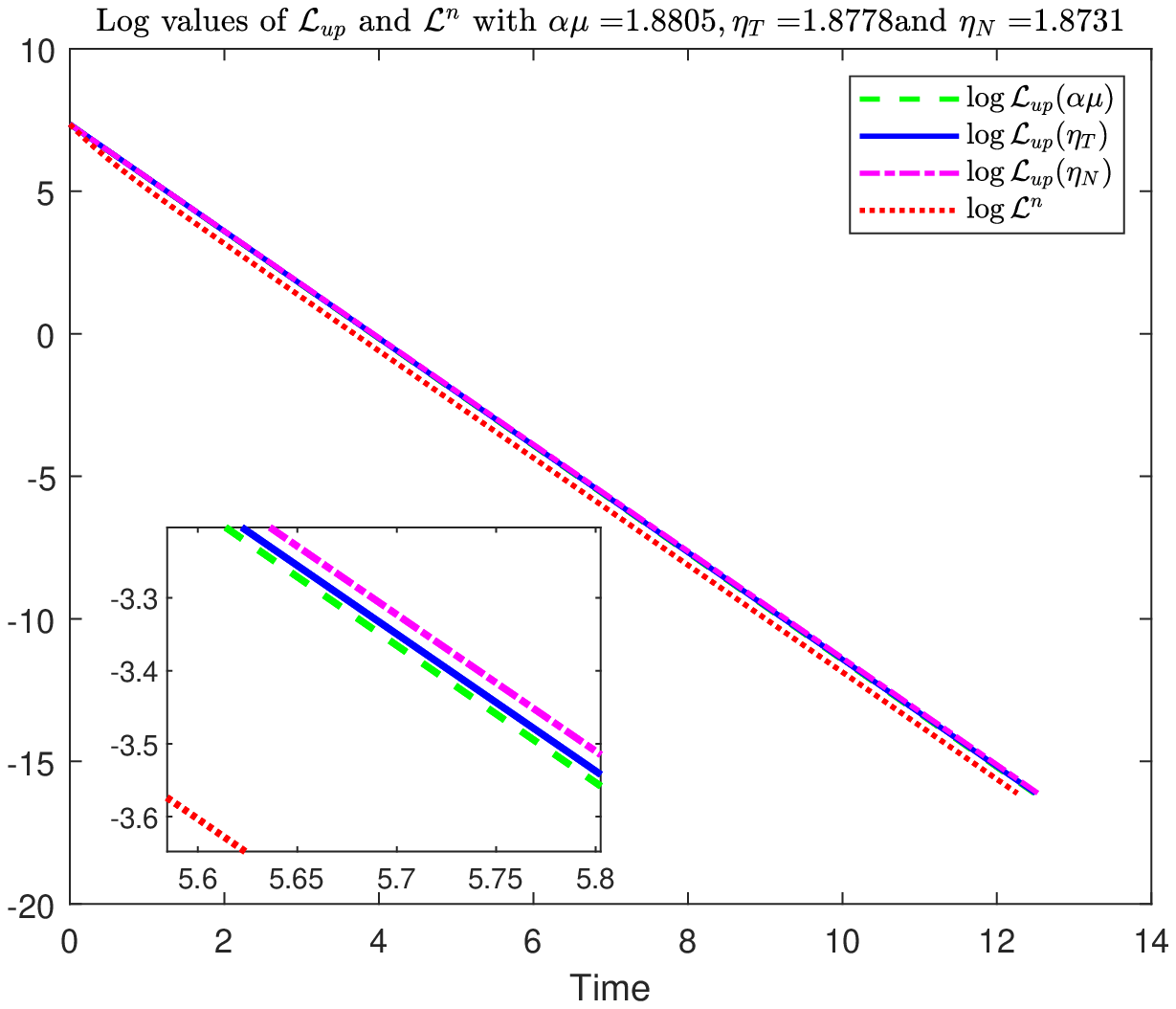}
		%\caption[]{ }    
		\label{fig:DecayRateComparion-05-02}
	\end{subfigure}
	\caption[]%
	{ The comparison of the decay of Lyapunov function and the upper bound in and its log values. Under $ {\Delta x} = \frac{1}{200} $, $ {\Delta t} = \text{CFL}{\Delta x}$, CFL = 0.5, $ \mu = 0.5 $ and $ T = 14 $.} 
	\label{fig:LyapunovProfile-05}
\end{figure}

\begin{figure}[H]
	\centering
	\begin{subfigure}[b]{0.495\textwidth}
		\centering
		\includegraphics[width=\textwidth]{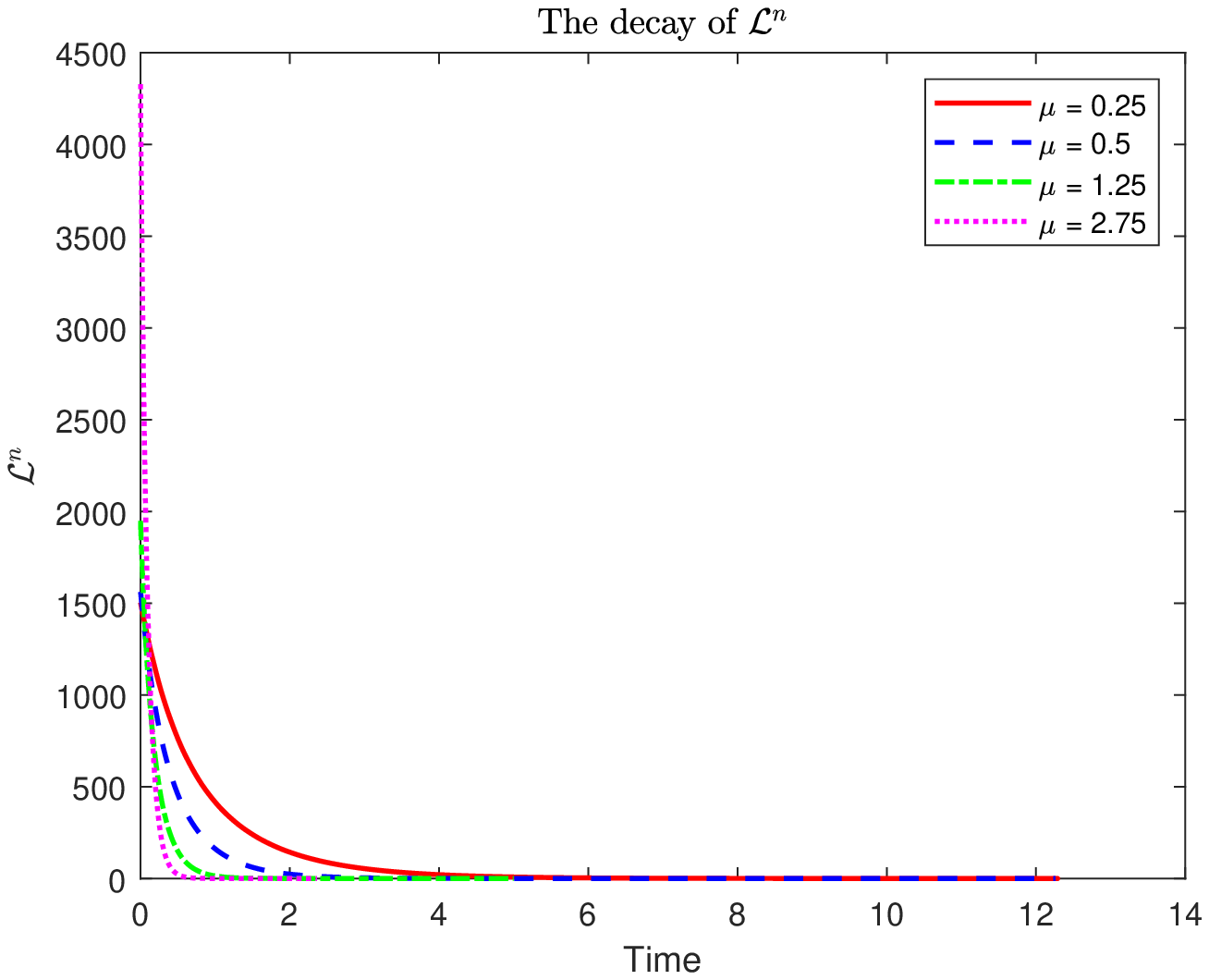}
		%\caption[]{ }    
		\label{fig:DecayRateComparion-06-01}
	\end{subfigure}
	\hfill
	\begin{subfigure}[b]{0.495\textwidth}  
		\centering 
		\includegraphics[width=\textwidth]{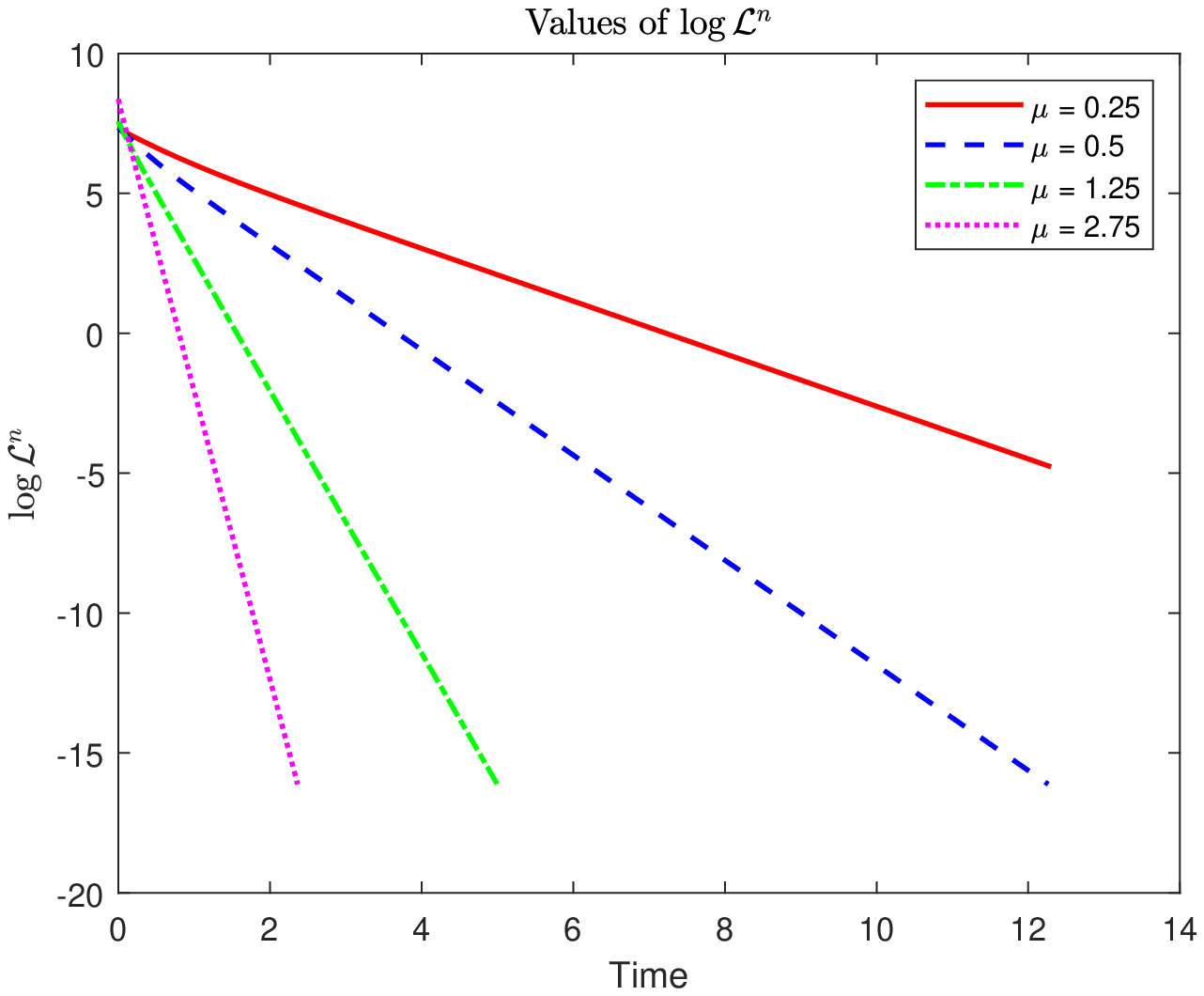}
		%\caption[]{ }    
		\label{fig:DecayRateComparion-06-02}
	\end{subfigure}
	\caption[]%
	{ The comparison of the decay of Lyapunov function and its log values for different choices of $ \mu > 0 $. Under $ {\Delta x} = \frac{1}{200} $, $ {\Delta t} = \text{CFL}{\Delta x}$, CFL = 0.5, and $ T = 14 $.} 
	\label{fig:LyapunovProfile-06}
\end{figure}

Similar to Section \ref{sec:linearwave} and \ref{sec:isothermalEuler}, in Figure \ref{fig:LyapunovProfile-05}, we compare different upper-bounds of the Lyapunov function with the $\mathcal{L}^n$ which is the discrete Lyapunov function stated in Equation \eqref{eq:dLyafun-system}.  It can also be seen that $\mathcal{L}_{\text{up}}(\alpha\mu) \le \mathcal{L}_{\text{up}}(\eta_T) \le \mathcal{L}_{\text{up}}(\eta_N)$.  For fixed $\mu$, we see that $\mathcal{L}^n$ has the fastest decay rate which can be attributed to the second-order terms in Equation \eqref{eq:discModifiedlinsystem}. We also show that when the values of $ \mu > 0 $ increase the decay rate increases, see Figure \ref{fig:LyapunovProfile-06}. This can also be attributed to the boundary condition in which the matrix $K$ depends on $\mu$.

\section{Conclusion}
In this paper a modified equation of linear hyperbolic systems of conservation laws was considered to show the effect of artificial viscosity in numerical boundary feedback control stabilisation. It has been proved that using the upwind scheme and depending on the CFL number that is applied, the decay rates vary. The decay rates when the CFL number is equal to one reflect the expected theoretical decay rates. Otherwise the rates depend on the values of the CFL number and the second derivative terms. This establishes the conjecture that was proposed in \cite{M.K.Banda:2013aa}. As further work, it will be interesting to analyse the effect of numerical schemes on the decay of the Lyapunov function for higher-order discretisation schemes.

\section*{Acknowledgments} 
The authors would like to thank Prof. M. Herty of IGPM, RWTH Aachen University, Germany for constructive and fruitful discussions on the subject addressed in this paper. They would further like to acknowledge support in part by the National Research Foundation of South Africa (Grant Number: 93099 and 102563) and the DFG grant number: GO 1920/10-1. 

%---------------------------
\bibliographystyle{plainnat}
\bibliography{Ref_ArtificialViscosity}

\end{document}